\theoremstyle{plain}
\newtheorem*{theoremnonum}{Theorem}
\newtheorem*{corollarynonum}{Corollary}
\theoremstyle{definition}
\theoremstyle{plain}
\newtheorem{theorem}{Theorem}
\newtheorem{prop}[theorem]{Proposition}
\newtheorem{corollary}[theorem]{Corollary}
\newtheorem{lemma}[theorem]{Lemma}
\newtheorem{claim}[theorem]{Claim}
\newtheorem{setup}[theorem]{Setup}
\theoremstyle{definition}
\newtheorem{definition}[theorem]{Definition}
\newtheorem{example}[theorem]{Example}
\theoremstyle{remark}
\newtheorem{remark}[theorem]{Remark}
\numberwithin{theorem}{section}
\newcommand{\bP}{\mathbb{P}}
\newcommand{\cO}{\mathcal{O}}
\newcommand{\cJ}{\mathcal{J}}
\newcommand{\cI}{\mathcal{I}}
\newcommand{\fb}{\mathfrak{b}}
\newcommand{\set}[1]{\left\{#1\right\}}
\newcommand{\Q}{\mathbb Q}
\newcommand{\C}{\mathbb C}
\newcommand{\Sing}{\textrm{Sing }}
\newcommand{\Supp}{\textnormal{Supp}}
\newcommand{\ord}{\textrm{ord}}
\newcommand{\adj}{\textnormal{adj}}
\title{Inversion of subadjunction and multiplier ideals}
\author{Eugene Eisenstein}
\address{Department of Mathematics, University of Michigan, Ann Arbor, MI 48109, USA}
\email{eisenst@umich.edu}
\thanks{The author was partially supported by an NSERC Postgraduate Scholarship and the Shields fellowship.}
\date{\today}
\begin{document}

\begin{abstract} We present a generalization of the multiplier ideal version of inversion of adjunction, often known as the restriction theorem, to centers of arbitrary codimension. We approach inversion of adjunction from the subadjunction point of view. Let $X$ be a smooth complex projective variety, let $Z \subseteq X$ be an exceptional log-canonical center of an effective $\Q$-divisor $\Delta$ on some dense open subset of $X$ that contains the generic point of $Z$, let $A$ be ample, let $0 < \epsilon \ll 1$ be small and let $\nu : Z^n \to Z$ be the normalization. Using ideas of Kawamata, we construct a subadjunction formula $\nu^*(K_X + \Delta + \epsilon A) \sim_\Q K_{Z^n} + \Delta_{Z^n}$ for certain special choices of $\Delta_{Z^n}$. We also define an adjoint ideal $\adj_Z(X, \Delta)$ that measures how non-klt $(X, \Delta)$ is outside the generic point of $Z$. With this setup, we prove that $\cJ(Z^n, \Delta_{Z^n})$ is contained in the conductor of $\nu$ and, with this identification of $\cJ(Z^n, \Delta_{Z^n})$ with an ideal on $Z$, we show that $\adj_Z(X, \Delta) \cdot \cO_Z = \cJ(Z^n, \Delta_{Z^n})$. Our theorem extends Kawamata's subadjunction theorem and implies that $(Z^n, \Delta_{Z^n})$ is klt if and only if $Z$ is an exceptional log-canonical center of $(X, \Delta)$.
\end{abstract}

\maketitle

\section{Introduction}

Let $X$ be a smooth projective variety over $\C$, let $\Delta$ be a $\Q$-divisor on $X$, and let $H$ be a smooth irreducible divisor on $X$ not contained in the support of $\Delta$. Recall that we say that $\Delta$ is klt if for every divisor $E$ over $X$ the discrepancy of $\Delta$ at $E$ is strictly greater than $-1$, plt along $H$ if this is true for every divisor with from $H$, and log-canonical if the discrepancy of $\Delta$ at every divisor over $X$ is $\geq -1$. We can think of these three notions as various grades of singularity of $\Delta$, in order from less singular to more singular.

Also recall that we can measure the failure of $\Delta$ to be klt with a multiplier ideal $\cJ(X, \Delta)$ defined as follows. Let $\pi : Y \to X$ be a log-resolution of $(X, \Delta)$ and set
\[\cJ(X, \Delta) = \pi_*\cO_Y(\lceil K_Y - \pi^*(K_X + \Delta) \rceil ).\]
The deeper the ideal the worse the failure. We can similarly define more general adjoint ideals $\adj_H(X, \Delta)$ that measure the failure of $\Delta$ to be plt near $H$ via the formula
\[\adj_H(X, \Delta) = \pi_*\cO_Y(\lceil K_Y - \pi^*(K_X + \Delta) \rceil + H')\]
where $H'$ is the strict transform of $H$ and $\pi : Y \to X$ is a log-resolution of $(X, \Delta + H)$.

In this paper we will concentrate on inversion of adjunction, an important and much studied tool in birational geometry. We are especially interested in the adjoint ideal version:
\[\adj_H(X, \Delta) \cdot \cO_H = \cJ(H, \Delta_H)\]
Whenever $H \not\subseteq \Supp(\Delta)$. This in particular implies the geometrically appealing statement that $(H, \Delta_H)$ is klt if and only if $(X, \Delta)$ is plt near $H$, that is, plt and klt singularities do not see transversality problems. We even have a short exact sequence
\[0 \to \cJ(X, \Delta + H) \to \adj_H(X, \Delta) \to \cJ(H, \Delta_H) \to 0.\]
This description of the kernel of the restriction map is an important ingredient in the algebraic approach to the famous extension theorem of Siu, see for example \cite{HMBounded}, \cite{KawamataExtension}, \cite{TakayamaBounded}, as well as \cite{RobNotes}.

It is known that the requirement that $X$ and $H$ be smooth can be relaxed. We wish to ask a question in a different direction: does $H$ have to be a divisor? If we replace $H$ by $Z \subseteq X$ an arbitrary subvariety, is there then a statement? If $Z$ is a complete intersection, or even locally a complete intersection, the correct statement easily follows from the restriction theorem stated above. The work of Takagi in \cite{TakagiAdjoint} and an earlier paper \cite{MeTakagi} by the author investigate a statement where $Z$ is only $\Q$-Gorenstein. Unfortunately, the $\Q$-Gorenstein condition can be restrictive in general. Can inversion of adjunction be generalized further?

We propose to view the question from the point of view of subadjunction. We will call a subvariety $Z \subseteq X$ a \emph{generically exceptional log-canonical center} of $(X, \Delta)$ if, for every log-resolution $\pi : Y \to X$ of $(X, \Delta)$, there is a unique divisor $E$ with discrepancy $-1$ so that $\pi(E) = Z$. $Z$ may not be normal, so let $\nu : Z^n \to Z$ be the normalization. Let $A$ be an ample divisor on $X$ and let $0 < \epsilon \ll 1$ be a small positive rational number. We can tautologically write
\[\nu^*(K_X + \Delta + \epsilon A)_Z \sim_\Q K_{Z^n} + \Delta_{Z^n}.\]

Kawamata's celebrated subadjunction theorem says that, if $Z$ is minimal with respect to inclusion among the log-canonical centers of $(X, \Delta)$, then $Z$ is normal and $\Delta_Z$ can be chosen to be klt (see Section 8 in \cite{CortiFlips} or the original papers \cite{KawamataSubadj1} and \cite{KawamataSubadj2}). We will check in Theorem \ref{subadj} that, if $Z$ is an arbitrary generically exceptional log-canonical center of $(X, \Delta)$ then Kawamata's construction goes through to give a class of special choices for the error term $\Delta_{Z^n}$. Any $\Delta_{Z^n}$ constructed in this way is called a Kawamata different. In Definition \ref{suitabledifferent} we define the notion of a generic Kawamata different.

For any pair $(X, \Delta)$ with a generically exceptional log-canonical center $Z \subseteq X$ we can define an adjoint ideal $\adj_Z(X, \Delta)$ as follows. Let $\pi : Y \to X$ be a log-resolution of $(X, \Delta)$ and let $E$ be the unique divisor with discrepancy $-1$ that lies over $Z$. Then
\[\adj_Z(X, \Delta) = \pi_* \cO_Y( \lceil K_Y - \pi^*(K_X + \Delta) + E\rceil).\]
This ideal measures the failure of $(X, \Delta)$ to be klt outside the generic point of $Z$. With this setup, our main theorem relates the singularities of $(Z^n, \Delta_{Z^n})$ with those of $(X, \Delta)$ as follows.

\begin{theoremnonum}[Theorem \ref{inversionofsubadj}] Let $X$ be a smooth complex projective variety and let $\Delta$ be an effective $\Q$-divisor on $X$. Suppose that $Z$ is a generically exceptional log-canonical center of $(X, \Delta)$. Let $\nu : Z^n \to Z$ be the normalization of $Z$. Let $\Delta_{Z^n}$ be a generic Kawamata different for $Z$.

Then:
\begin{enumerate}
\item $\cJ(Z^n, \Delta_{Z^n})$ is contained in the conductor ideal of $\nu$.
\item The conductor is also an ideal on $Z$ and so $\cJ(Z^n, \Delta_{Z^n})$ is naturally an ideal on $Z$. With this identification, we have that
\[\adj_Z(X, \Delta) \cdot \cO_Z = \cJ(Z^n, \Delta_{Z^n}).\]
\item We have a natural exact sequence
\[0 \to \cJ(X, \Delta) \to \adj_Z(X, \Delta) \to \cJ(Z^n, \Delta_{Z^n}) \to 0.\]
\end{enumerate}
\end{theoremnonum}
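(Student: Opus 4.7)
The plan is to extract everything from a single well-chosen log-resolution $\pi : Y \to X$ of $(X, \Delta)$, with $E$ the unique divisor of discrepancy $-1$ above $Z$. I begin with the tautological short exact sequence on $Y$:
\[
0 \to \cO_Y(D) \to \cO_Y(D+E) \to \cO_E\bigl((D+E)|_E\bigr) \to 0,
\]
where $D := \lceil K_Y - \pi^*(K_X+\Delta) \rceil$. Since the coefficient of $E$ in $D$ is $-1$, the divisor $D+E$ has $E$-coefficient $0$, so the restriction to $E$ is well-defined. Pushing forward by $\pi$ and invoking the local vanishing theorem for multiplier ideals, which gives $R^1\pi_*\cO_Y(D) = 0$, I obtain the exact sequence
\[
0 \to \cJ(X, \Delta) \to \adj_Z(X, \Delta) \to \pi_*\cO_E\bigl((D+E)|_E\bigr) \to 0.
\]
This will become the exact sequence of (3) once the right-hand term is correctly identified.

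Next I show that $\pi_*\cO_E((D+E)|_E)$ is naturally an ideal of $\cO_Z$. The key point is that $\cI_Z \cdot \adj_Z(X, \Delta) \subseteq \cJ(X, \Delta)$: for any local section $f \in \cI_Z$, the pullback $\pi^*f$ is a regular function on $Y$ vanishing along all of $\pi^{-1}(Z) \supseteq E$, hence $\ord_E(\pi^*f) \geq 1$. Multiplying a section of $\cO_Y(D+E)$ (which has $E$-order $\geq 0$) by $\pi^*f$ raises its $E$-order by at least one, yielding a section of $\cO_Y(D)$. Therefore the cokernel in the sequence above is annihilated by $\cI_Z$, coincides with the restriction $\adj_Z(X, \Delta) \cdot \cO_Z$ of the adjoint ideal, and in particular sits inside $\cO_Z \subseteq \nu_*\cO_{Z^n}$.

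The heart of the proof is the identification
\[
g_*\cO_E\bigl((D+E)|_E\bigr) = \cJ(Z^n, \Delta_{Z^n})
\]
of ideal sheaves on $Z^n$, where $g : E \to Z^n$ is the natural factorization of $\pi|_E$ through the normalization $\nu$. Using the flexibility in the choice of a generic Kawamata different (Definition \ref{suitabledifferent}) together with a sufficiently fine log-resolution $\pi$, I will arrange that $g$ dominates a log-resolution of $(Z^n, \Delta_{Z^n})$, and that $(D+E)|_E$ agrees, up to a $g$-exceptional divisor whose contribution is trivial under $g_*$, with $\lceil K_E - g^*(K_{Z^n} + \Delta_{Z^n})\rceil$. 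The computation hinges on the subadjunction formula $\nu^*(K_X+\Delta+\epsilon A)_Z \sim_\Q K_{Z^n}+\Delta_{Z^n}$ from Theorem \ref{subadj}, together with the standard codimension-one adjunction $(K_Y + E)|_E = K_E$, which together let me rewrite $(D+E)|_E$ so that $g^*(K_{Z^n}+\Delta_{Z^n})$ appears explicitly. Once the identification is in hand, combined with the previous paragraph it yields $\nu_*\cJ(Z^n, \Delta_{Z^n}) \subseteq \cO_Z$, hence the conductor containment of part (1); statement (2) is then the resulting equality of ideals of $\cO_Z$, and (3) is already established.

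The main obstacle will be precisely this final matching of divisors on $E$ with the canonical-bundle-formula data on $Z^n$. Kawamata's construction of $\Delta_{Z^n}$ absorbs a Hodge-theoretic moduli part, and the prime divisors of $\pi$ lying above $Z$ but distinct from $E$ contribute $g$-exceptional terms on $E$ that must be shown to have no effect on the pushforward. The genericity hypothesis in Definition \ref{suitabledifferent} is precisely designed to tame these contributions, and the real work lies in tracing carefully through the canonical bundle formula to verify that for a generic $\Delta_{Z^n}$ both the moduli and the exceptional contributions are harmlessly absorbed by the $\epsilon A$ perturbation, leaving the multiplier ideal unchanged.
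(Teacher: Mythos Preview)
Your outline is correct through the short exact sequence and the identification of the cokernel with $\adj_Z(X,\Delta)\cdot\cO_Z$; this matches the paper's Step~1. The gap is in your ``heart of the proof'': you propose to identify $g_*\cO_E\bigl((D+E)|_E\bigr)$ with $\cJ(Z^n,\Delta_{Z^n})$ by showing that $(D+E)|_E$ agrees with $\lceil K_E - g^*(K_{Z^n}+\Delta_{Z^n})\rceil$ up to $g$-exceptional divisors that vanish under $g_*$. This cannot work as stated, because $g:E\to Z^n$ is a \emph{fiber space}, not a birational morphism. There is no meaningful notion of ``$g$-exceptional'' that makes pushforward kill the discrepancy, and $g_*\cO_E(\lceil K_E - g^*(\cdot)\rceil)$ is not a multiplier ideal in any standard sense.

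The paper deals with this by inserting the intermediate smooth birational model $h:W\to Z^n$ from Claim~\ref{funkyresolution}, so that $E\xrightarrow{f_E}W\xrightarrow{h}Z^n$, and computing $\cJ(Z^n,\Delta_{Z^n})=h_*\cO_W(\lceil -B_R\rceil)$ on $W$ rather than on $E$. Crucially, the paper explicitly notes that one does \emph{not} have $\ord_{F_i^\alpha}(f_E^*B_R)=\ord_{F_i^\alpha}(R)$ for all components $F_i^\alpha$ of $f_E^{-1}(B)$: the divisor $(D+E)|_E=\lceil -R\rceil$ and the pullback $f_E^*\lceil -B_R\rceil$ genuinely differ. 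The equality of the two pushforwards to $Z$ is instead proved by a valuation-by-valuation comparison using the two defining conditions (a) and (b) of $B_R$ in Theorem~\ref{hodgethy}: condition~(a) gives one inclusion and condition~(b) (existence of a log-canonical center over each component of $B$) gives the other, together with the genericity hypothesis that the Rees valuations of $\fb$ lie among the components of $B$. Your suggestion that the discrepancy is ``harmlessly absorbed by the $\epsilon A$ perturbation'' is not what happens; the $\epsilon A$ only makes the moduli part $J(E/W,R)$ effective, and the real content is this valuation-theoretic matching on $W$.
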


The proof proceeds by constructing an appropriate resolution and pushing down the exact sequence that defines the adjoint ideal. We apply a technical lemma about local sections of certain sheaves, together with local vanishing, to prove that the term on the right-hand side of the short exact sequence as indeed an ideal whose local sections are precisely those satisfying some order of vanishing criteria. This allows us to reduce to a computation with discrepancies. We then carefully analyze these discrepancies with the higher dimensional Kodaira canonical bundle formula (see Theorem \ref{hodgethy} for the statement).

It is not hard to see that if $(X, \Delta')$ is a log-canonical pair then any subvariety $Z \subseteq X$ is a generically exceptional log-canonical center of some other $\Delta$ (see Lemma \ref{anyZisgenelcc}). Thus, our theorem is a form of inversion of adjunction that applies to an arbitrary subvariety of any smooth $X$.

This theorem has a number of immediate corollaries, including Kawamata's original theorem.

\begin{corollarynonum} All generic Kawamata differents are effective. All generic Kawamata differents have the same multiplier ideal.
\end{corollarynonum}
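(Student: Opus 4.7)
The plan is to derive both claims essentially formally from Theorem \ref{inversionofsubadj}. The second claim is almost immediate from part (2), while the first needs a small local argument based on part (1).

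For the independence of the multiplier ideal, observe that part (2) of the main theorem asserts
\[\cJ(Z^n, \Delta_{Z^n}) = \adj_Z(X, \Delta) \cdot \cO_Z\]
as ideals on $Z$. The right-hand side depends only on the data $(X, \Delta, Z)$ and not on the choice of generic Kawamata different. Combined with part (1), which places $\cJ(Z^n, \Delta_{Z^n})$ inside the conductor of $\nu$, and with the fact that ideals on $Z^n$ contained in the conductor correspond bijectively under $\nu_*$ to ideals on $Z$ contained in the conductor, this pins down the multiplier ideal as an ideal on $Z^n$ as well. Hence any two generic Kawamata differents have the same multiplier ideal.

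For effectiveness, I will argue by contradiction. Suppose some generic Kawamata different $\Delta_{Z^n}$ has a prime component $D$ appearing with negative coefficient $-a$, $a > 0$. Since $Z^n$ is normal its singular locus has codimension at least two, and since $\Delta_{Z^n}$ has only finitely many prime components, I can find a point $p \in D$ at which $Z^n$ and $D$ are smooth and at which $D$ meets no other component of $\Delta_{Z^n}$. Locally at $p$ the pair $(Z^n, \Delta_{Z^n})$ is already log-resolved, and the local multiplier ideal equals
\[\cO_{Z^n, p}(\lceil a \rceil D),\]
which strictly contains $\cO_{Z^n, p}$. But part (1) of Theorem \ref{inversionofsubadj} forces $\cJ(Z^n, \Delta_{Z^n}) \subseteq \cO_{Z^n}$, contradicting the containment just computed. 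Hence all coefficients of $\Delta_{Z^n}$ are non-negative, so $\Delta_{Z^n}$ is effective.

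The only delicate step is the local multiplier ideal computation, which is standard once a smooth point $p$ off the singular locus of $Z^n$ and off the other components of $\Delta_{Z^n}$ has been produced; normality of $Z^n$ guarantees such a $p$ exists on any codimension-one subvariety, so this obstacle is essentially formal.
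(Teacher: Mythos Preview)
Your proposal is correct and follows essentially the same route as the paper. The paper's proof is terser: it simply notes that $\cJ(Z^n,\Delta_{Z^n})$ being an ideal (i.e.\ contained in $\cO_{Z^n}$) is equivalent to $\Delta_{Z^n}$ being effective, and that $\cJ(Z^n,\Delta_{Z^n}) = \adj_Z(X,\Delta)\cdot\cO_Z$ depends only on $(X,\Delta,Z)$. Your local computation at a smooth point of a negative component spells out exactly why ``$\cJ\subseteq\cO_{Z^n}$ implies $\Delta_{Z^n}$ effective,'' so the two arguments coincide in substance.

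One small inaccuracy worth noting: the correspondence you invoke between $\cO_{Z^n}$-ideals contained in the conductor and $\cO_Z$-ideals contained in the conductor is not a bijection in general. An $\cO_Z$-ideal inside the conductor $\fc$ need not be stable under multiplication by $\cO_{Z^n}$; for instance, on the cusp $y^2=x^3$ with normalization coordinate $t$, the $\cO_Z$-ideal $t^2\cO_Z$ lies inside $\fc=t^2\cO_{Z^n}$ but is not an $\cO_{Z^n}$-ideal since $t\cdot t^2=t^3\notin t^2\cO_Z$. However, your argument only uses injectivity of $I\mapsto\nu_*I$ on $\cO_{Z^n}$-ideals, which is immediate because $\nu$ is affine, so this does not affect the conclusion.
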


\begin{corollarynonum}[Kawamata subadjunction] If $\Delta$ is log-canonical and $Z$ is an exceptional log-canonical center of $\Delta$ then $Z$ is normal and any generic Kawamata different is effective and klt.
\end{corollarynonum}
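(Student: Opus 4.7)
The plan is to use Theorem \ref{inversionofsubadj} together with the log-canonicity of $(X,\Delta)$ to show that $\adj_Z(X,\Delta)=\cO_X$; from this both conclusions of the corollary are immediate.

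To establish $\adj_Z(X,\Delta)=\cO_X$, I would fix a log-resolution $\pi:Y\to X$ of $(X,\Delta)$ and write $K_Y-\pi^*(K_X+\Delta)=\sum_F a_F F$, where $E$ is the unique divisor with $a_E=-1$ lying over $Z$. In the exceptional setting (as opposed to merely the generically exceptional one), $E$ is in fact the unique divisor of discrepancy exactly $-1$ over $X$. Log-canonicity then forces $a_F>-1$, and hence $\lceil a_F\rceil\geq 0$, for every $F\neq E$. Because the coefficient of $E$ in $\lceil K_Y-\pi^*(K_X+\Delta)\rceil+E$ is $\lceil -1\rceil+1=0$, we have
\[\lceil K_Y-\pi^*(K_X+\Delta)\rceil+E \;=\; \sum_{F\neq E}\lceil a_F\rceil F,\]
which is effective. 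A short check shows that it is also $\pi$-exceptional: if $F\neq E$ is the strict transform of a prime divisor of $X$, then $\pi(F)$ is either not in $\Supp\Delta$ (whence $a_F=0$) or lies in $\Delta$ with multiplicity $b_F\leq 1$ by log-canonicity and $b_F<1$ by the uniqueness of $E$, so $a_F\in(-1,0)$ and $\lceil a_F\rceil=0$. Pushing an effective $\pi$-exceptional divisor down to the normal base $X$ yields $\cO_X$, so $\adj_Z(X,\Delta)=\cO_X$.

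With this identity in hand, part (2) of Theorem \ref{inversionofsubadj} gives $\cJ(Z^n,\Delta_{Z^n})=\adj_Z(X,\Delta)\cdot\cO_Z=\cO_Z$. Part (1) places $\cJ(Z^n,\Delta_{Z^n})$ inside the conductor ideal of $\nu$, which itself sits inside $\cO_Z$, so the conductor equals $\cO_Z$; this forces $\nu$ to be an isomorphism and $Z$ to be normal. Since $Z=Z^n$, the identity $\cJ(Z^n,\Delta_{Z^n})=\cO_{Z^n}$ combined with the effectiveness of $\Delta_{Z^n}$ from the previous corollary yields that $(Z^n,\Delta_{Z^n})$ is klt. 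The only real obstacle is the discrepancy bookkeeping needed to reach $\adj_Z(X,\Delta)=\cO_X$; everything else is a formal consequence of the main theorem.
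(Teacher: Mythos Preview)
Your proof is correct and follows essentially the same route as the paper's: both establish $\adj_Z(X,\Delta)=\cO_X$ and then read off the conclusions from parts (1) and (2) of Theorem~\ref{inversionofsubadj}. The paper simply asserts $\adj_Z(X,\Delta)=\cO_X$, whereas you spell out the discrepancy bookkeeping (effectivity and $\pi$-exceptionality of $\lceil K_Y-\pi^*(K_X+\Delta)\rceil+E$), which is a welcome addition; note that your argument, like the paper's, relies on reading ``exceptional'' as in Definition~\ref{exccenterdefn}(2) so that $E$ is the unique discrepancy $-1$ divisor on all of $Y$, not merely among those mapping to $Z$.
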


We emphasize that our result uses Kawamata's methods and should not be regarded as a new proof of his subadjunction theorem.

\begin{corollarynonum}[Naive inversion of subadjunction] Suppose $Z$ is an exceptional log-canonical center of $\Delta$ in a neighborhood of the generic point of $Z$. Then any generic Kawamata different is klt on $Z^n$ if and only if $\Delta$ is log-canonical and $Z$ is a minimal log-canonical center of $\Delta$.
\end{corollarynonum}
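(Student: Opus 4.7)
The plan is to translate the corollary into the language of multiplier ideals and then read off the three conclusions of Theorem \ref{inversionofsubadj}, using throughout that a pair $(Y,B)$ is klt if and only if $\cJ(Y,B) = \cO_Y$. The claim thus amounts to: $\cJ(Z^n, \Delta_{Z^n}) = \cO_{Z^n}$ if and only if $(X, \Delta)$ is log-canonical in a neighborhood of $Z$ and $Z$ is a minimal log-canonical center.

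For the forward direction, I would start from $\cJ(Z^n, \Delta_{Z^n}) = \cO_{Z^n}$. By part (1), $\cO_{Z^n}$ sits inside the conductor of $\nu$, so the conductor is trivial and $Z = Z^n$ is normal. By part (2), $\adj_Z(X, \Delta) + \cI_Z = \cO_X$ near $Z$, and a local Nakayama argument (using $(\cI_Z)_z \subseteq \fm_z$ at each $z \in Z$) upgrades this to $\adj_Z(X, \Delta) = \cO_X$ in a neighborhood of $Z$. Unwinding the defining formula
\[\adj_Z(X, \Delta) = \pi_* \cO_Y(\lceil K_Y - \pi^*(K_X + \Delta) + E \rceil)\]
and using $a(E; X, \Delta) = -1$, this equality translates into the bound $a(F; X, \Delta) > -1$ for every prime divisor $F \neq E$ on $Y$ whose image meets $Z$. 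This simultaneously yields log-canonicity of $\Delta$ near $Z$ and the fact that $E$ is the unique divisor of discrepancy $-1$ centered over $Z$, so $Z$ is the only log-canonical center meeting $Z$ and is therefore minimal.

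For the reverse direction, I would assume $(X, \Delta)$ is log-canonical near $Z$ and that $Z$ is a minimal log-canonical center. For any $F \neq E$ over $X$ with center meeting $Z$, log-canonicity gives $a(F; X, \Delta) \geq -1$; equality would force $\pi(F)$ to be a log-canonical center meeting $Z$, hence $\pi(F) = Z$ by minimality, producing a second divisor of discrepancy $-1$ over $Z$ and contradicting the generic exceptionality hypothesis. Hence $a(F; X, \Delta) > -1$ strictly, and the same effectivity computation as above gives $\adj_Z(X, \Delta) = \cO_X$ near $Z$. Part (3) then yields a surjection $\cO_X \to \cJ(Z^n, \Delta_{Z^n})$ viewed as an ideal on $Z$; combined with part (1), this forces the conductor to be trivial and $\cJ(Z^n, \Delta_{Z^n}) = \cO_{Z^n}$, so $\Delta_{Z^n}$ is klt.

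The one delicate step I anticipate is in the forward direction: the equality $\adj_Z(X, \Delta) = \cO_X$ near $Z$ must be transferred from the chosen log-resolution to a statement about every divisor over $X$ centered at a point of $Z$, not merely the prime divisors on a single $Y$. The standard fix is to pass to a further blowup on which any given divisor appears as a prime, invoking the resolution-independence of $\adj_Z(X, \Delta)$ so that the effectivity computation applies there as well. Once this point is acknowledged, everything else is direct bookkeeping with parts (1)--(3) of Theorem \ref{inversionofsubadj}.
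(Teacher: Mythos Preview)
Your argument follows the same route as the paper's: both reduce the equivalence to checking when each side of the identity $\adj_Z(X,\Delta)\cdot\cO_Z=\cJ(Z^n,\Delta_{Z^n})$ from Theorem \ref{inversionofsubadj} is the unit ideal, with your version supplying the Nakayama step and the discrepancy bookkeeping that the paper's one-line proof leaves implicit.

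The only place that is too quick is the clause ``hence $\pi(F)=Z$ by minimality'' in your reverse direction. Minimality of $Z$ only excludes log-canonical centers \emph{properly contained} in $Z$; it does not by itself rule out a divisor $F\neq E$ of discrepancy $-1$ whose center $\pi(F)$ strictly contains $Z$, and such an $F$ would still make $\adj_Z(X,\Delta)$ nontrivial along $Z$. To close this case one invokes Koll\'ar--Shokurov connectedness near the generic point of $Z$: if such an $F$ existed, the non-klt locus over a general point of $Z$ would contain both $E$ and a divisor mapping to $\pi(F)$, and blowing up their intersection produces a second discrepancy $-1$ divisor with center $Z$, contradicting generic exceptionality. (The remaining case, where $\pi(F)$ meets $Z$ without containing it, is handled by the standard fact that irreducible components of intersections of lc centers are lc centers, which then contradicts minimality.) The paper's terse proof glosses over exactly the same point, so this is a shared omission rather than a flaw particular to your write-up.
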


We also easily deduce a Kawamata-Viehweg type vanishing theorem for our adjoint ideal (see Corollary \ref{KVadj}).

Earlier work on this question includes \cite{AmbroInversion}, \cite{DanoKimExtension} and \cite{KarlAdjunction}. In \cite{AmbroInversion} the author considers fiber spaces $E \to Z$ and base change along birational morhpisms $Z' \to Z$, in our case the morphisms involved may not be fiber products. In \cite{DanoKimExtension} the author proves an Ohsawa-Takegoshi type theorem, with estimates, for maximal log-canonical centers. This is analogous to our Kawamata-Viehweg type vanishing theorem but more powerful.  In \cite{KarlAdjunction} the author considers the positive characteristic question, using Frobenius splitting criteria as analogs of log-canonical centers.

We would like to thank L. Ein, K. Schwede and C. Xu for valuable discussions. We are very grateful to R. Lazarsfeld and M. Musta\c t\u a for their help and support.

In this paper we will work exclusively over $\C$.

\section{Generically exceptional log-canonical centers and the adjoint ideal}

We briefly recall the definition and basic properties of exceptional centers. First, we fix notation.

\begin{definition} Let $X$ be a quasi-projective variety and let $\Delta$ be a Weil $\Q$-divisor on $X$. We will say that $(X, \Delta)$ is a \emph{pair} if $X$ is normal, $\Delta$ is effective, and $K_X + \Delta$ is $\Q$-Cartier.
\end{definition}

\begin{definition}\label{lccenterdefn} Let $(X, \Delta)$ be a log-canonical pair. A subvariety $Z \subseteq X$ is called a log-canonical center if there exists a log-resolution $\pi : Y \to X$ of $\Delta$ and a divisor $E \subseteq Y$ with discrepancy $-1$ so that $\pi(E) = Z$.
\end{definition}

The following standard theorem is a crucial part of the theory.

\begin{theorem} Let $(X, \Delta)$ be a log-canonical pair. There exists a log-resolution $\pi : Y \to X$ of $\Delta$ so that all log-canonical centers of $\Delta$ are realized by $\pi$, in other words, for every subvariety $Z \subseteq X$ that is a log-canonical center of $\Delta$ there is a divisor $E \subseteq Y$ with discrepancy $-1$ so that $Z = \pi(E)$.
\end{theorem}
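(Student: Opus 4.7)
The plan is to iteratively blow up carefully chosen strata of the snc locus of the discrepancy-$(-1)$ divisors on a fixed initial log-resolution, exposing one log-canonical center per blowup as the image of a new discrepancy-$(-1)$ exceptional divisor.

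Fix any log-resolution $\pi_0 : Y_0 \to X$ of $(X, \Delta)$ and let $E_1, \ldots, E_r$ be the prime divisors on $Y_0$ with discrepancy $-1$. Since $(X, \Delta)$ is log-canonical and $\pi_0$ is a log-resolution, no divisor over $X$ has discrepancy $< -1$ and $\sum E_i$ has simple normal crossings. The crucial preliminary fact---standard for snc log-canonical pairs---is that every log-canonical center of $(X, \Delta)$ is of the form $\pi_0(W)$, where $W$ is an irreducible component of some intersection $E_{i_1} \cap \cdots \cap E_{i_k}$. One checks this by pulling the pair back to $Y_0$, where it becomes an snc pair, and invoking the explicit description of log-canonical centers of snc pairs as strata of the coefficient-one components. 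In particular, the set of log-canonical centers is finite.

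For each log-canonical center $Z$ that is not already of the form $\pi_0(E_i)$ for some single $i$, choose an irreducible component $W_Z$ of some $E_{i_1} \cap \cdots \cap E_{i_k}$ with $k \geq 2$ and $\pi_0(W_Z) = Z$. A direct discrepancy calculation for the blowup of a smooth codimension-$k$ center lying transversally in exactly $k$ of the $E_i$'s yields
\[a(F_Z; X, \Delta) \;=\; (k-1) + \sum_{j=1}^{k} a(E_{i_j}; X, \Delta) \;=\; (k-1) - k \;=\; -1,\]
so $F_Z$ is a new discrepancy-$(-1)$ divisor with $\pi(F_Z) = Z$. Perform these blowups sequentially in order of decreasing codimension of $W_Z$ (replacing each subsequent chosen stratum by its strict transform); the resulting $\pi : Y \to X$ realizes every log-canonical center of $(X, \Delta)$.

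The main technical obstacle is the bookkeeping in the inductive step: one must verify at each stage that the next chosen stratum (or its strict transform) is still smooth and cut out transversally by exactly the expected number of discrepancy-$(-1)$ divisors, that the snc configuration on the ambient variety is preserved, and that the discrepancies of the previously-realized divisors $E_i$ and $F_{Z'}$ do not change. These all follow from the standard fact that blowing up a smooth stratum of an snc divisor yields a new snc divisor (consisting of the strict transforms together with the new exceptional component) and that the blowup ordering by decreasing codimension ensures each $W_Z$ meets the previously blown-up loci cleanly. Since there are only finitely many log-canonical centers, the process terminates, yielding the desired log-resolution.
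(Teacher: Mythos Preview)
Your argument is correct, but it is considerably more elaborate than what the paper does. The paper simply cites Corollary~2.31 of Koll\'ar--Mori to obtain that $(X,\Delta)$ has only finitely many log-canonical centers, and then declares the result obvious: once the centers $Z_1,\ldots,Z_n$ are finite in number, each $Z_j$ is by definition realized on \emph{some} log-resolution $\pi_j:Y_j\to X$ via a divisor $E_j$ of discrepancy $-1$, and any log-resolution $\pi:Y\to X$ dominating all of the $Y_j$ carries the strict transforms of the $E_j$, which retain discrepancy $-1$ and still dominate the $Z_j$.

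Your route is different in that it is fully constructive. You identify the log-canonical centers intrinsically as the $\pi_0$-images of strata of the reduced discrepancy-$(-1)$ locus on a single fixed log-resolution, and then manufacture the realizing divisors one at a time by blowing up those strata. This has the advantage of giving finiteness as a byproduct (there are only finitely many strata), and it makes transparent exactly which exceptional divisor realizes each center. The paper's approach, by contrast, is a two-line reduction to a literature citation; it is shorter but less self-contained and gives no explicit model. Either argument suffices for the statement at hand.
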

\begin{proof} The proof of Corollary 2.31 in \cite{KollarAndMori} shows that the number of log-canonical centers is finite. Once this is known the theorem is obvious.
\end{proof}

For our purposes it will be important to define the following special cases of log-canonical centers.

\begin{definition}\label{exccenterdefn} Let $(X, \Delta)$ be a log-canonical pair and let $\pi : Y \to X$ be a log-resolution of $\Delta$ realizing all of its log-canonical centers. Let $Z \subseteq X$ be a log-canonical center of $\Delta$.
\begin{enumerate}
\item $Z$ is a minimal log-canonical center if $Z$ is a minimal element of the set of log-canonical centers of $\Delta$ with respect to inclusion.
\item $Z$ is an exceptional log-canonical center if $Z$ is minimal and the divisor $E \subseteq Y$ with $\pi(E) = Z$ and discrepancy $-1$ is unique.
\item $Z$ is generically an exceptional log-canonical center if there is a dense open subset $U \subseteq X$ containing the generic point of $Z$ so that $Z \cap U$ is an exceptional log-canonical center of $(U, \Delta_U)$. In other words, the divisor $E \subseteq Y$ with $\pi(E) = Z$ and discrepancy $-1$ is unique but $Z$ may not be a minimal log-canonical center.
\end{enumerate}
\end{definition}

\begin{example} Suppose that $X$ is smooth and $\Delta$ is a reduced simple normal crossings divisor, say
\[\Delta = \sum E_i.\]
Then $(X, \Delta)$ is log-canonical. The log-canonical centers of $\Delta$ are simply intersections of the $E_i$. The minimal centers are the intersections of maximal subcollections $E_{i_\alpha}$ so that
\[Z = \bigcap_{i_\alpha} E_{i_\alpha}\]
is not empty. Every intersection of any subcollection of the $E_i$ is generically exceptional. All minimal centers are exceptional.
\end{example}

\begin{example}\label{genlccenters} Let $X = \C^2$ and let $\Delta$ be the cusp, that is, the image of the morphism $\C \to \C^2$ that sends $t \mapsto (t^2, t^3)$. Then $\Delta' = c \cdot \Delta$ is log-canonical when $c = 5/6$. The origin is an exceptional log-canonical center of $(X, \Delta')$. The cusp itself is a generically exceptional log-canonical center of $(X, \Delta)$ even though $(X, \Delta)$ is not log-canonical.
\end{example}

As we show now, if $(X, \Delta')$ is a log-canonical pair then any subvariety $Z \subseteq X$, not necessarily normal, is a generically exceptional log-canonical center of some other pair structure $(X, \Delta)$.

\begin{lemma}\label{anyZisgenelcc} Let $(X, \Delta')$ be a log-canonical pair and let $Z \subseteq X$ be an arbitrary subvariety. Then there exists a Weil $\Q$-divisor $\Delta$ on $X$ so that $Z$ is a generically exceptional log-canonical center of $(X, \Delta)$.
\end{lemma}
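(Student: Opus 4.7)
My plan is to construct $\Delta$ directly from a single high-multiplicity divisor, making essentially no use of $\Delta'$ beyond the ambient projective setting. Since being a generically exceptional log-canonical center is a local condition at the generic point $\eta$ of $Z$, it suffices to arrange that on a neighborhood of $\eta$ the pair $(X, \Delta)$ is log-canonical and has a unique divisor $E$ on some birational model of $X$ with discrepancy $-1$ and $\pi(E) = Z$. I may assume $Z \subsetneq X$: otherwise no divisor $E$ on a birational model satisfies $\pi(E) = X$. Set $c := \codim_X Z \ge 1$, choose a very ample line bundle $L$, and pick an integer $m > c$ large enough that $\cI_Z^m \otimes L$ is globally generated and its leading-form map at the generic point of $Z$ is surjective on global sections; both conditions hold for $L$ sufficiently ample. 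For a general section $s \in H^0(X, \cI_Z^m \otimes L)$, let $D := \dvs(s)$ and set
\[\Delta := \tfrac{c}{m}\, D.\]

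To verify the claim, I shrink $X$ to a neighborhood of $\eta$ on which $Z$ is smooth, and let $\pi : \tilde X \to X$ be the blowup of $Z$ with exceptional divisor $E$. For general $s$, the divisor $D$ has multiplicity exactly $m$ along $Z$, so $\pi^* D = m E + \tilde D$; and by Bertini applied to the generic degree-$m$ leading form the strict transform $\tilde D$ meets $E$ transversally along a smooth hypersurface in each $\bP^{c-1}$-fiber of $E \to Z$. Thus $\pi$ is a log-resolution of $(X, D)$ near $\eta$, and using $K_{\tilde X} = \pi^* K_X + (c-1) E$ I compute
\[K_{\tilde X} - \pi^*(K_X + \Delta) = (c-1) E - \tfrac{c}{m}\bigl(m E + \tilde D\bigr) = -E - \tfrac{c}{m}\, \tilde D,\]
so $E$ has discrepancy $-1$ and is a log-canonical place over $Z$.

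The decisive step, and the main obstacle I anticipate, is showing $E$ is the only log-canonical place over $Z$. The transformed pair $(\tilde X, E + (c/m)\tilde D)$ is log-smooth near $\eta$ with boundary coefficients $1$ and $c/m < 1$, where the strict inequality uses $m > c$. Any divisorial valuation centered in $E$ and lying over $Z$ is quasi-monomial in the log-smooth parameters $(E, \tilde D)$ with some weights $(a_1, a_2) \in \Q_{\ge 0}^2 \setminus \{0\}$, and its log-discrepancy with respect to the transformed pair evaluates to $a_1(1-1) + a_2(1 - c/m) = a_2(1 - c/m)$, which vanishes exactly when $a_2 = 0$; this forces the valuation to be a positive multiple of $\ord_E$. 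Hence $E$ is unique, so $Z$ is a generically exceptional log-canonical center of $(X, \Delta)$. The strict inequality $m > c$ is essential here, since $m = c$ would make the coefficient of $\tilde D$ equal to $1$ and would produce spurious log-canonical places on further blowups of $E \cap \tilde D$.
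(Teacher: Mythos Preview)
Your proof has a gap in the generality stated. The lemma assumes only that $(X, \Delta')$ is a log-canonical pair, so $X$ is merely normal; $K_X$ need not be $\Q$-Cartier, and $X$ need not be smooth at the generic point $\eta$ of $Z$ (for instance, $Z$ could be the vertex of a cone over an elliptic curve). Since you set $\Delta = (c/m) D$ with $D$ Cartier and discard $\Delta'$ entirely, there is no reason for $K_X + \Delta$ to be $\Q$-Cartier, so discrepancies are not even defined. Even granting $\Q$-Cartierness, your formula $K_{\tilde X} = \pi^* K_X + (c-1) E$ and the claim that the blowup of $Z$ is a log-resolution both presuppose that $X$ is smooth near $\eta$. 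The paper's proof avoids these issues precisely by keeping $\Delta'$ in the boundary: it works with $\Delta' + c\,\Delta_0$, where $\Delta_0$ is a sum of general members of $|\cO_X(A) \otimes \cI_Z|$, so that $K_X + \Delta' + c\,\Delta_0$ is automatically $\Q$-Cartier, and it computes on an arbitrary log-resolution rather than on a specific blowup.

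That said, when $X$ is smooth near $\eta$---which is the paper's working hypothesis from Setup~\ref{setupinvadj} onward and the only case in which the lemma is actually used---your argument is correct and follows a genuinely different route. You produce the exceptional place in one step via a single divisor of multiplicity $m > c$ along $Z$ and read off the discrepancies directly from the blowup; the paper instead rescales a sum of general hypersurfaces through $Z$ to the log-canonical threshold at $\eta$ and then invokes the tie-breaking procedure (as in Proposition~8.7.1 of \cite{CortiFlips}) to pass from a minimal center to an exceptional one. Your approach is more explicit and sidesteps tie-breaking entirely; the paper's approach is less computational but robust to singularities of $X$.
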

\begin{proof} Let $A$ be a divisor so ample that $\cO_X(A) \otimes \cI_Z$ is globally generated. Then we can choose an $s$ and choose divisors
\[H_1, \ldots, H_s \in |\cO_X(A) \otimes \cI_Z|\]
so that, if we set
\[\Delta_0 = (H_1 + \cdots + H_s)\]
then $\Delta' + \Delta_0$ is log-canonical outside $Z$ and is not klt at the generic point of $Z$. Take a log-resolution $f : Y \to X$ of $(X, \Delta' + \Delta_0)$. Consider, for some $c \in \Q_{>0}$ to be determined, the divisor
\[K_Y - f^*(K_X + \Delta' + c \cdot \Delta_0)= K_Y - f^*(K_X + \Delta') - c f^*(H_1 + \cdots H_s) = \sum_i b_i E_i.\]
Since $\Delta' + \Delta_0$ was log-canonical outside $Z$ but not klt at the generic point of $Z$ we can choose a $c$ with $0 < c \leq 1$ so that $\Delta' + c \cdot \Delta_0$ is log-canonical outside $Z$, all $b_i$ with $E_i$ dominating $Z$ are $\leq -1$ and some $b_i$ with $E_i$ dominating $Z$ is equal to $-1$. Then the locus where $\Delta' + c \cdot \Delta_0$ is not log-canonical does not contain the generic point of $Z$.

We have shown that there exists an open subset $U \subseteq X$ and a pair structure $(X, \Delta' + c \cdot \Delta_0)$ so that $Z \cap U$ is a minimal log-canonical center of $(U, (\Delta' + c \cdot \Delta_0)_U)$. We can then apply the tie-breaking procedure as in, for example, Proposition 8.7.1 in \cite{CortiFlips}, to get an $(X, \Delta)$ with $Z \cap U$ an exceptional log-canonical center of $(U, \Delta_U)$, that is, $Z$ is a generically exceptional log-canonical center of $(X, \Delta)$.
\end{proof}

For the rest of this paper we fix the following setup.

\begin{setup}\label{setupinvadj} Let $X$ denote a smooth projective variety and $\Delta$ an effective $\Q$-divisor on $X$. Let $Z \subseteq X$ be a subvariety that is a generically exceptional log-canonical center of $(X, \Delta)$.
\end{setup}

With this setup one can define the following variant of adjoint ideals.

\begin{definition}\label{adjdefn} Let $\Delta$ be a $\Q$-divisor as in Setup \ref{setupinvadj}. Let $g : X' \to X$ be a log-resolution of $\Delta$. Define
\[\adj_Z(X, \Delta) = g_* \cO_{X'}\left(\left\lceil K_{X'/X} - g^*\Delta\right\rceil + E\right)\]
where $E$ is the unique divisor dominating $Z$ with discrepancy $-1$.
\end{definition}

\begin{prop} This ideal does not depend on the choice of $g$.
\end{prop}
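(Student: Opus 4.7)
My plan is to reduce to comparing pushforwards under a single birational morphism, via the standard common-refinement argument. Given two log-resolutions $g_1$ and $g_2$ of $\Delta$ that extract $E$, I would produce a third log-resolution $g_3 : X_3 \to X$ dominating them through birational morphisms $h_i : X_3 \to X_i$ and still extracting $E$ (start from a resolution of the indeterminacies of $X_1 \dashrightarrow X_2$, then blow up further to ensure SNC; since $E$ is a divisor on both $X_1$ and $X_2$, its strict transform is automatically a divisor on $X_3$). It then suffices to show that for any birational morphism $h : X'' \to X'$ between two such resolutions one has $h_* \cO_{X''}(\lceil D'' \rceil) = \cO_{X'}(\lceil D' \rceil)$, where $D' = K_{X'/X} - g^*\Delta + E$ and $D'' = K_{X''/X} - (gh)^*\Delta + E''$, with $E''$ the strict transform of $E$.

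The formal computation proceeds as follows. Writing $h^*E = E'' + G$ for $G$ effective and $h$-exceptional, and using $K_{X''/X} = K_{X''/X'} + h^*K_{X'/X}$, one finds $D'' = K_{X''/X'} + h^*D' - G$. Decomposing $D' = \lceil D' \rceil - \{-D'\}$ with $\{-D'\}$ effective and having coefficients in $[0,1)$, and rounding up, yields
\[\lceil D'' \rceil \;=\; h^*\lceil D' \rceil + K_{X''/X'} - G - M,\]
where $M := \lfloor h^*\{-D'\}\rfloor$ is an $h$-exceptional effective divisor (the non-$h$-exceptional components of $h^*\{-D'\}$ inherit coefficients in $[0,1)$ and so floor to zero). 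The projection formula then reduces the claim to showing $h_*\cO_{X''}(K_{X''/X'} - G - M) = \cO_{X'}$. Since $K_{X''/X'} - G - M$ is $h$-exceptional, the pushforward automatically lies inside $\cO_{X'}$, and the reverse inclusion follows once I verify $K_{X''/X'} - G - M \geq 0$.

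The main obstacle is exactly this effectivity: the $-G$ term has no counterpart in the classical multiplier-ideal proof and can have arbitrarily large coefficients on divisors extracted over $E$. The key input I would use is that the pair $(X', E + \{-D'\})$ is plt: its support is SNC (because $g$ is a log-resolution of $\Delta$, and hence of $\Delta + E$), the coefficient of $E$ is exactly $1$ (since $\ord_E(D') = a_E + 1 = 0$, so $E$ does not appear in $\{-D'\}$), and every other coefficient lies in $[0,1)$. For an arbitrary $h$-exceptional prime divisor $F$ on $X''$, its center in $X'$ has codimension $\geq 2$, so $F$ is exceptional over $X'$ and distinct from $E$ as a valuation; plt-ness then yields
\[\ord_F(K_{X''/X'}) - \ord_F(h^*E) - \ord_F(h^*\{-D'\}) > -1.\]
Since $\ord_F(K_{X''/X'} - G - M)$ differs from the left-hand side by the non-negative quantity $\{\ord_F(h^*\{-D'\})\}$ and is itself an integer $> -1$, it must be $\geq 0$. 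Non-$h$-exceptional coefficients vanish trivially, giving $K_{X''/X'} - G - M \geq 0$ and completing the argument.
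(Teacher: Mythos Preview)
Your argument is correct and follows essentially the same strategy as the paper: reduce to a tower $X'' \xrightarrow{h} X' \xrightarrow{g} X$ of log-resolutions, and use a discrepancy inequality for an auxiliary SNC pair on $X'$ to show the relevant $h$-exceptional divisor is effective, whence the pushforward agrees by the projection formula.

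The one genuine difference is in how the $+E$ term is packaged. The paper works with $D = K_{X'/X} - g^*\Delta$ (without $+E$), first invokes klt-ness of $(X',\{-D\})$ to obtain
\[
\lceil K_{X''/X} - \pi^*\Delta \rceil \;=\; f^*\lceil D \rceil + B'
\]
with $B'\ge 0$ and $f$-exceptional, and then treats the coefficient at $E^\pi$ separately. You instead fold $E$ into $D'$ and use plt-ness of $(X', E + \{-D'\})$ in one stroke; this is cleaner, and in fact your plt inequality is exactly what is needed to absorb the extra $-G = -(h^*E - E'')$ contribution that appears when one compares the two sides at an $h$-exceptional prime lying over $E$. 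Both routes hinge on the same underlying fact (SNC boundary with a single reduced component is plt), so the arguments are variants of one another rather than different proofs.
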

\begin{proof} Consider a sequence of birational maps
\[\xymatrix{
X'' \ar[r]^f\ar[dr]_\pi & X' \ar[d]^g \\
& X
}\]
with $g$ a log-resolution of $\Delta$ and $f$ a log-resolution of $g^*(\Delta) + \textnormal{Exc}(g)$. Note that $\pi$ is a log-resolution of $\Delta$. As in Theorem 9.2.20 in \cite{LazarsfeldPositivityII}, it is enough to show that $g$ and $\pi$ compute the same ideal.

By the projection formula, it is enough to show that, if $E^\pi$ and $E^g$ are the exceptional divisors of discrepancy $-1$ dominating $Z$ on $X''$ and $X'$ respectively, then
\begin{equation}\label{etsindep}
\left\lceil K_{X''/X} - \pi^*\Delta \right\rceil + E^\pi = f^*\left(\left\lceil K_{X'/X} - g^*\Delta \right\rceil + E^g\right) + B
\end{equation}
with $B$ effective and $f$-exceptional.

First, we write
\[\lceil K_{X''/X} - \pi^*\Delta \rceil = K_{X''/X'} - \lfloor -f^*(K_{X'/X} - g^*\Delta) \rfloor.\]
Set
\[D = K_{X'/X} - g^*\Delta.\]
Part (3) of Corollary 2.31 in \cite{KollarAndMori} applies to our situation and says that
\[a(E; X', \lceil D \rceil - D) > -1.\]
It follows that the divisor
\[\lceil K_{X''/X} - \pi^*\Delta \rceil - f^* \lceil D \rceil = K_{X''/X'} - \lfloor -f^*(K_{X'/X} - g^*\Delta) \rfloor - f^* \lceil D \rceil\]
is effective. In other words,
\begin{equation}\label{robindep}
\left\lceil K_{X''/X} - \pi^*\Delta \right\rceil = f^*\left\lceil K_{X'/X} - g^*\Delta \right\rceil + B'
\end{equation}
with $B'$ effective and $f$-exceptional. On the other hand, by the definition of the discrepancy we have that
\[\ord_{E^\pi}\left(\left\lceil K_{X''/X} - \pi^*\Delta \right\rceil\right) = \ord_{E^g}\left(\left\lceil K_{X'/X} - g^*\Delta \right\rceil\right) = -1.\]
It follows that
\begin{equation}\label{myindep}
\ord_{E^\pi}\left(\left\lceil K_{X''/X} - \pi^*\Delta \right\rceil + E^\pi\right) = \ord_{E^g}\left(\left\lceil K_{X'/X} - g^*\Delta \right\rceil + E^g \right) = 0.
\end{equation}
Thus, (\ref{robindep}) shows that (\ref{etsindep}) holds for all divisors except $E^\pi$ and (\ref{myindep}) shows that (\ref{etsindep}) holds for $E^\pi$. This gives (\ref{etsindep}).
\end{proof}

\section{Kawamata subadjunction and the Kawamata different}

Using a modification of the proof of Kawamata's subadjunction theorem we will now prove a subadjunction formula for generically exceptional log-canonical centers. We will follow the exposition of Koll\'ar in section 8 of \cite{CortiFlips} with some minor modifications that apply to our situation. First, we state the crucial fact from Hodge theory that everything depends on.

\begin{theorem}[Theorem 8.5.1 in \cite{CortiFlips}]\label{hodgethy} Let $E$ and $W$ be smooth projective varieties and let $f : E \to W$ be a dominant morphism. Let $F$ be a general fiber of $f$. Let $R$ be a $\Q$-divisor on $E$ and let $B$ be a reduced divisor on $W$ so that:
\begin{enumerate}
\item $K_E + R \sim_\Q f^*(\text{some divisor on } W)$,
\item The Kodaira dimension of the divisor $K_F + R_F$ on $F$ is equal to $0$,
\item $f : E \to W$, $R$ and $B$ satisfy the \emph{standard normal crossings assumptions}:
\begin{enumerate}
\item $E$ and $W$ are smooth (as assumed in our statement),
\item $R + f^*B$ and $B$ are simple normal crossings,
\item $f$ is smooth over $W \setminus B$, and
\item if $F'$ is a fiber of $f$ over a point $p \in W \setminus B$ then $R_{F'}$ is klt.
\end{enumerate}
\end{enumerate}
Then we can write
\[K_E + R \sim_\Q f^*(K_W + J(E/W, R) + B_R)\]
where:
\begin{itemize}
\item $J(E/W, R)$ is a divisor defined only up to linear equivalence. It is the so-called \emph{moduli part}. It depends only on $(F, R_F)$ and on $W$. Under our standard normal crossings assumptions it is nef.
\item $B_R$ is a $\Q$-divisor that is uniquely determined once we fix the divisor $B$. It is defined by the following condition: it is the unique $\Q$-divisor for which there is a codimension $\geq 2$ subset $S \subset W$ such that
\begin{enumerate}
\item $(E \setminus f^{-1}(S), R + f^*(B - B_R))$ is log-canonical, and
\item every irreducible component of $B$ is dominated by a log-canonical center of $(E, R + f^*(B - B_R))$.
\end{enumerate}
\end{itemize}
\end{theorem}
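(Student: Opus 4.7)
The strategy is to treat the two outputs of the formula separately: first construct the boundary part $B_R$ from a local log-canonical computation that makes the uniqueness characterization automatic, then define $J(E/W,R)$ as the $\Q$-linear equivalence class that must remain in order for the formula to hold, and finally prove nefness of $J(E/W,R)$ by a Hodge-theoretic argument. The first two steps are birational-combinatorial; the third is where the genuine analytic input enters the proof.

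For $B_R$: for each prime divisor $D$ of $B$, use the simple normal crossings assumption to define
\[
b_D = \sup\{t \in \Q : (E, R + t f^*D) \text{ is sub-log-canonical over the generic point of } D\},
\]
which is rational and explicitly computable from the multiplicities along the components of $R + f^*B$ that dominate $D$. Set $B_R = \sum_D (1 - b_D)\,D$, summed over prime components of $B$. Condition (1) in the characterization holds away from a codimension $\geq 2$ subset of $W$ by the choice of $b_D$ as a supremum, and condition (2) holds because at the generic point of $D$ this threshold is achieved. Any other $\Q$-divisor satisfying (1) and (2) must then agree with $B_R$ in codimension one, hence everywhere.

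With $B_R$ in hand, the hypothesis that $K_F + R_F$ has Kodaira dimension zero combined with the assumption that $K_E + R$ is $\Q$-linearly equivalent to a pullback from $W$ forces $K_F + R_F \sim_\Q 0$ on a general fiber, and by Iitaka-fibration reasoning the residual class $K_E + R - f^*(K_W + B_R)$ is then $\Q$-linearly equivalent to the pullback of a unique class on $W$; define $J(E/W,R)$ to be this class. Over $W \setminus B$ the class is represented, up to multiplication by a suitably divisible $m$, by the invertible sheaf $f_*\cO_E(m(K_{E/W} + R - f^*B_R))$, and the dependence only on $(F, R_F)$ and on $W$ is then immediate.

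The main obstacle is nefness of $J(E/W,R)$. The standard attack is a semistable reduction: a base change $W' \to W$ ramified along $B$ followed by a cyclic cover of $E$ ramified along horizontal components of $R$ (to clear the fractional coefficients) produces a projective family $E' \to W'$ whose generic fibers have trivial canonical class. The Hodge bundle of the associated polarized variation of Hodge structures on the smooth locus is nef by the Fujita-Kawamata semipositivity theorem, and its behavior across the discriminant locus is controlled by Schmid's nilpotent orbit theorem. Taking invariants under the Galois action of the covering and applying the projection formula transports this nefness back down to $W$, yielding nefness of $J(E/W,R)$. This Hodge-theoretic step is the true depth of the theorem and the only place where something beyond birational combinatorics is used.
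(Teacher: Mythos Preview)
The paper does not contain a proof of this theorem. It is stated with an explicit attribution (``Theorem 8.5.1 in \cite{CortiFlips}'') and is used as a black box; no argument for it appears anywhere in the text. So there is nothing in the paper to compare your proposal against.

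That said, your outline is a reasonable summary of the standard proof as it appears in the cited source and in the earlier papers of Kawamata and Ambro. The construction of $B_R$ via the fiberwise log-canonical threshold is exactly how the boundary part is defined, and the nefness of the moduli part is indeed obtained from Fujita--Kawamata semipositivity of Hodge bundles after suitable covers. A few caveats if you intend this as more than a sketch: the claim that $J(E/W,R)$ ``depends only on $(F,R_F)$ and on $W$'' is a genuinely nontrivial assertion (birational invariance of the moduli part under base change and higher models), not something that drops out automatically from the definition as a residual class; and the passage from semipositivity of a Hodge bundle to nefness of $J(E/W,R)$ requires a careful identification of the extended Hodge line bundle with $m\cdot J(E/W,R)$ across the boundary, which is where the standard normal crossings assumptions are actually consumed. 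But at the level of a sketch your proposal is sound, and there is simply no proof in the present paper to set it beside.
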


\begin{remark} In fact, our statement is less general than the statement of Koll\'ar in \cite{CortiFlips}. Because of this, condition (1) implies our condition (2). We included condition (2) in order to follow the exposition of Koll\'ar.
\end{remark}

We now come to one of the main ideas of our point of view on the subadjunction theorem. Recall the statement of Kawamata's subadjunction theorem: let $A$ be an ample divisor and let $0 < \epsilon \ll 1$ be a small rational number. Suppose that $Z \subseteq X$ is an exceptional log-canonical center of $\Delta$. Then $Z$ is normal and we can choose a Weil $\Q$-divisor $\Delta_Z$ on $Z$ so that
\[(K_X + \Delta + \epsilon A)_Z \sim_\Q K_Z + \Delta_Z\]
with $\Delta_Z$ klt. In our context, we can regard Kawamata's theorem as saying that the error term $\Delta_Z$ is small in an appropriate sense.

What if $Z$ is only generically exceptional? Then $Z$ may not be normal, so let $\nu : Z^n \to Z$ be the normalization morphism. We can always tautologically write
\[\nu^*(K_X + \Delta + \epsilon A)_Z \sim_\Q K_{Z^n} + \Delta_{Z^n}\]
for lots of choices of $\Delta_{Z^n}$. In fact, there are some natural choices of $\Delta_{Z^n}$ that are suggested by Kawamata's theorem. We construct these choices now.

\begin{theorem}\label{subadj} Let $X$ be a smooth projective variety, let $A$ be an ample divisor on $X$ and let $\epsilon > 0$. Let $\Delta$ be a $\Q$-divisor on $X$ and suppose that $Z$ is a generically exceptional log-canonical center of $(X, \Delta)$. Let $\nu : Z^n \to Z$ be the normalization of $Z$. With the above setup there exists an explicitly constructed $\Q$-divisor $\Delta_{Z^n}$ on $Z^n$ that we will call the \emph{Kawamata different}, so that
\begin{enumerate}
\item $K_{Z^n} + \Delta_{Z^n}$ is $\Q$-Cartier,
\item $\nu^*(K_X + \Delta + \epsilon A)_Z \sim_\Q K_{Z^n} + \Delta_{Z^n}$.
\end{enumerate}
\end{theorem}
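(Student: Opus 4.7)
The plan is to adapt Kollár's presentation of Kawamata subadjunction (Section 8 of \cite{CortiFlips}) to the setting where the log-canonical center $Z$ may fail to be normal. We take a log resolution $\pi : Y \to X$ of $\Delta + \epsilon A$ realizing every log-canonical center, and let $E \subseteq Y$ be the unique prime divisor with $\pi(E) = Z$ and discrepancy $-1$. Writing
\[K_Y + E - \pi^*(K_X + \Delta + \epsilon A) \sim_\Q \sum_{i} a_i E_i\]
with the sum over prime components $E_i \neq E$ of $\pi^{-1}(\Delta + \epsilon A) \cup \Exc(\pi)$ and all $a_i > -1$, adjunction on $E$ yields, with $f := \pi|_E$ and $R := -\sum_i a_i E_i|_E$,
\[K_E + R \sim_\Q f^*\bigl((K_X + \Delta + \epsilon A)|_Z\bigr).\]

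Next, we pass to a smooth base in order to invoke Theorem \ref{hodgethy}. Since $E$ is smooth, $f$ factors through $\nu$ as $\tilde f : E \to Z^n$. We form the Stein factorization $\tilde f = h \circ g$ with $g : E \to W$ of connected fibers and $h : W \to Z^n$ finite, and take a resolution $\mu : \tilde W \to W$. After further blowing up $Y$ above $E$, we may assume that $g$ factors through $\tilde W$ as $\bar g : E \to \tilde W$ and that the triple $(\bar g, R, B)$ satisfies the standard normal crossings assumptions of Theorem \ref{hodgethy} for a suitable reduced SNC divisor $B$ on $\tilde W$. Hypothesis (1) of the theorem follows from the adjunction equivalence above, and hypothesis (2) is then automatic by the remark after the theorem, producing
\[K_E + R \sim_\Q \bar g^*\bigl(K_{\tilde W} + J(E/\tilde W, R) + B_R\bigr).\]

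Combining this with the adjunction equivalence, using that $\bar g$ has connected fibers to descend along $\bar g_* \cO_E = \cO_{\tilde W}$, and pushing down along the birational $\mu$, we obtain on $W$ the equivalence
\[K_W + \mu_*(J + B_R) \sim_\Q h^*\bigl(\nu^*(K_X + \Delta + \epsilon A)|_Z\bigr).\]
Since pullback along the finite surjection $h$ is injective on $\Q$-Cartier divisor classes on the normal variety $Z^n$, this descends along $h$ to define a $\Q$-Weil divisor $\Delta_{Z^n}$ on $Z^n$ satisfying $K_{Z^n} + \Delta_{Z^n} \sim_\Q \nu^*(K_X + \Delta + \epsilon A)|_Z$. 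The right-hand side is the pullback of a $\Q$-Cartier divisor, so $K_{Z^n} + \Delta_{Z^n}$ is $\Q$-Cartier, and both (1) and (2) follow.

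The main obstacle is this final descent along $h$: if $\deg h > 1$, producing an actual $\Q$-divisor $\Delta_{Z^n}$ (and not merely a class) requires a careful trace-type construction, explicit in terms of $J$, $B_R$, and the ramification of $h$, so that $\Delta_{Z^n}$ realizes the claimed equivalence rather than a multiple of it. A secondary technical point is arranging the simple normal crossings conditions of Theorem \ref{hodgethy} through the blowups of $Y$ without disturbing the uniqueness of $E$ as the discrepancy $-1$ divisor over $Z$.
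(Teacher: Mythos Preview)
Your proposal identifies the right ingredients but runs into a genuine obstacle that you yourself flag: the descent along the finite map $h:W\to Z^n$ coming from Stein factorization. Injectivity of $h^*$ on $\Q$-Cartier classes does not let you \emph{produce} a divisor $\Delta_{Z^n}$ on $Z^n$ with $h^*(K_{Z^n}+\Delta_{Z^n})\sim_\Q K_W+\mu_*(J+B_R)$; it only tells you such a divisor is unique if it exists. If $\deg h>1$ you would need a trace/norm construction, and you do not supply one. Without this the construction is not ``explicit'' in the sense the theorem demands (of course one can always tautologically set $\Delta_{Z^n}:=\nu^*(K_X+\Delta+\epsilon A)|_Z-K_{Z^n}$, but that defeats the purpose).

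The paper avoids this problem altogether by building $W$ differently. Rather than taking the Stein factorization of $E\to Z^n$, it first chooses a smooth blow-up $\pi:Y\to X$ so that the strict transform $W\subseteq Y$ of $Z$ is a resolution of $Z$; thus $\pi_E:W\to Z$ is \emph{birational by construction}. Only then does it manufacture $E$ lying over $W$ (via a component of a fibre product $E\times_Z W$ followed by further resolution) so that Theorem~\ref{hodgethy} applies to $f_E:E\to W$. Since $W\to Z$ is birational, the factorization $W\xrightarrow{h}Z^n\xrightarrow{\nu}Z$ has $h$ birational, and one simply sets $\Delta_{Z^n}=h_*(J_\epsilon+B_R)$. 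No finite descent is needed, and no connectedness of fibres of $E\to Z$ has to be established.

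A secondary difference: you fold $\epsilon A$ into the log resolution from the start, whereas the paper applies the canonical bundle formula to $\Delta$ alone, obtaining a nef moduli part $J(E/W,R)$, and only then uses ampleness of $A$ to replace the nef class $J+\epsilon\,\pi_E^*A$ by a big and nef, hence effective, divisor $J_\epsilon$. This separation is what later allows one to choose $J_\epsilon$ generically and control $\lfloor J_\epsilon+B_R\rfloor$, which is essential for Definition~\ref{suitabledifferent} and the inversion theorem.
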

\begin{proof} We begin with the following claim.

\begin{claim}\label{funkyresolution} There exists a log-resolution $g : Y' \to X$ of $\Delta$ with the following properties. Let $E$ be the unique divisor on $Y'$ of discrepancy $-1$ lying over $Z$ and let $g_E : E \to Z$ be the restriction of $g$. Then we can arrange for $g$ to factor as in the following diagram
\begin{equation*}
\xymatrix{
E \ar@/_2pc/[dd]_{g_E} \ar[d]^{f_E} &\subseteq& Y' \ar[d]_f \ar@/^2pc/[dd]^g\\
W \ar[d]^{\pi_E} &\subseteq& Y \ar[d]_\pi\\
Z &\subseteq& X
}
\end{equation*}
so that, if we write
\[K_{Y'} + E + \Delta' \sim_\Q g^*(K_X + \Delta)\]
with $g_*\Delta' = \Delta$ and let $R = \Delta'_E$ (this divisor has simple normal crossings support), then $W$ carries a divisor $B$ that, together with $R$ and $f$, satisfies the \emph{standard normal crossings assumptions} of Theorem \ref{hodgethy}.
\end{claim}

First, we will finish the proof of Theorem \ref{subadj} assuming the truth of the claim. Note that, since $Z$ is generically an exceptional log-canonical center of $\Delta$, $R$ is klt on a generic fiber of $g_E$.

Using Theorem \ref{hodgethy}, we obtain a divisor $B_R$ supported on $B$ so that
\[K_E + R \sim_\Q f_E^*(K_W + J(E/W, R) + B_R).\]
Since $A$ is ample and $J(E/W, R)$ is nef, the sum $J(E/W, R) + \epsilon \pi_E^*A$ is big and nef and so is $\Q$-equivalent to some effective divisor $J_\epsilon$. But recall that $K_E + R \sim_\Q g_E^*(K_X + \Delta)_Z$. It follows that
\[K_{W} + J_\epsilon + B_R \sim_\Q \pi_E^*(K_X + \Delta + \epsilon A)_Z.\]
Now, $\pi_E$ must factor through the normalization $\nu : Z^n \to Z$; write $\pi_E = \nu \circ h$ for this factorization. Pushing the above formula forward along $h$ yields
\[\nu^*(K_X + \Delta + \epsilon A)_Z \sim_\Q K_{Z^n} + h_* (J_\epsilon + B_R).\]
Set $\Delta_{Z^n} = h_* (J_\epsilon + B_R)$.
\end{proof}

\begin{proof}[Proof of Claim \ref{funkyresolution}] For clarity, we will construct the required resolution in several steps.

\emph{Step 1:} Begin with any log-resolution of $\Delta$, say $g : X_1 \to X$, and let $R$ and $E$ be as in the statement of the claim. Take any reduced divisor $B_0$ so that $\Supp(B_0)$ contains $\Sing(Z)$ and the locus of points at which $g$ is not smooth or $R$ and $F_p$ are not simple normal crossings, where $F_p$ is the fiber of $g_E : E \to Z$ over $p$.

\emph{Step 2:} Take a smooth blow-up $\pi : Y \to X$ that does not blow up the generic point of $Z$ and that induces a birational morphism $\pi_E : W \to Z$ with $W$ smooth and $B = \textnormal{red}(\pi_E^* B_0)$ simple normal crossings. Note that, currently, $\pi$ is related to $g$ only because $B$ depends on $g$. We have so far the following diagram
\[\xymatrix{
W \ar@{^{(}->}[r]\ar[dr]_{\pi_E} & Y \ar[dr]_\pi & & X_1 \ar[dl]^g\\
& Z \ar@{^{(}->}[r] & X
}\]

\emph{Step 3:} In the above diagram we can identify two opportunities for a fiber product:
\[\xymatrix{
& E \ar[d]\\
W \ar[r] & Z
}
\ \ \ \ \ \ \ 
\xymatrix{
& X_1 \ar[d]\\
Y \ar[r] & X
}
\]
Let $E' \subseteq E \times_Z W$ be the component of the fiber product dominating $W$ and let $X' \subseteq X_1 \times_X Y$ be the component of the fiber product containing $E'$. Note that the blow-up $X' \to X_1$ is an isomorphism outside $B$ so that, outside $B$, we have that $E'$ is isomorphic to $E$ and $X'$ is isomorphic to $X_1$. In particular, $X'$ is a log-resolution of $\Delta$ outside $B$.

\emph{Step 4:} We have the following diagram
\[\xymatrix{
E' \ar@{^{(}->}[r]\ar[d] & X' \ar[d] \\
W \ar@{^{(}->}[r]\ar[d] & Y \ar[d] \\
Z \ar@{^{(}->}[r] & X
}\]
with $X' \to X$ isomorphic to $X_1 \to X$ outside $B$. Complete the diagram to a new diagram
\[\xymatrix{
E \ar@{^{(}->}[r]\ar[d] & Y' \ar[d] \\
E' \ar@{^{(}->}[r]\ar[d] & X' \ar[d] \\
W \ar@{^{(}->}[r]\ar[d] & Y \ar[d] \\
Z \ar@{^{(}->}[r] & X
}\]
where the morphism $g : Y' \to X' \to X$ has the following properties. Let $k : Y' \to X'$ be the factoring morphism. We require that $g$ be a log-resolution of $\Delta$ and that, if we replace $E$ by its strict transform under $k$, $E$ becomes smooth. Let $R' = k^*R$, write
\[K_{Y'} + E + \Delta' \sim_\Q g^*(K_X + \Delta)\]
with $g_*\Delta' = \Delta$ and replace $R$ with $R = \Delta'_E$. The next property for which we ask is that the exceptional set of $k$ should have simple normal crossings. Note that then $R$ differs from $R'$ by divisors that are restrictions to $E$ of divisors exceptional for $k$. So, we may also require that $R + f^*B$ be simple normal crossings.

Since $X' \to X$ is isomorphic to $X_1 \to X$ outside $B$, $k$ may be chosen to only blow up centers whose images on $X$ are contained in $B$. It follows that these choices construct the diagram in the statement of the claim with the given initial choice of $g : X_1 \to X$ and $B$.
\end{proof}

The following condition on $\Delta_{Z^n}$ should be thought of as saying that $\Delta_{Z^n}$ is sufficiently generic.

\begin{definition}\label{suitabledifferent} Notation as in the previous theorem. We say that a Kawamata different $(Z^n, \Delta_{Z^n})$ is \emph{generic} if, in addition to the requirements in Claim \ref{funkyresolution}, the following are satisfied.
\begin{itemize}
\item The map $\pi_E$ is sufficiently high - the Rees valuations of $\adj_Z(X, \Delta) \cdot \cO_{Z^n}$ are extracted by $\pi_E$,
\item $J_\epsilon$ is general - $\lfloor J_\epsilon + B_R \rfloor = \lfloor B_R \rfloor$,
\item $B$ is sufficiently large - the components of $B$ include the Rees valuations of $\adj_Z(X, \Delta) \cdot \cO_{Z^n}$,
\end{itemize}
\end{definition}

\begin{remark} To achieve this we make our choices in Claim \ref{funkyresolution} as follows. We select $B$ large enough to contain the support of $\adj_Z(X, \Delta) \cdot \cO_{Z^n}$ and we select $\pi : Y \to X$ to factor through the blow-up of $\adj_Z(X, \Delta)$, this makes $\pi_E$ sufficiently high and $B$ sufficiently large. To make $J_\epsilon$ sufficiently general, we use the fact that $J_\epsilon$ is big and nef to choose it to be of the form $H + \epsilon C$, where $H$ is a general ample divisor, $C$ is effective and $\epsilon$ is sufficiently small. Note that all this may \emph{a priori} change $B_R$ and $\Delta_{Z^n}$.
\end{remark}

\section{Inversion of subadjunction}

We are now ready to state our main theorem.

\begin{theorem}[Inversion of subadjunction]\label{inversionofsubadj} Let $X$ be a smooth projective variety and let $\Delta$ be a $\Q$-divisor on $X$. Suppose that $Z \subseteq X$ is a generically exceptional log-canonical center of $(X, \Delta)$. Let $\nu : Z^n \to Z$ be the normalization of $Z$. Let $\Delta_{Z^n}$ be a generic Kawamata different as in Definition \ref{suitabledifferent}.

Since $K_{Z^n} + \Delta_{Z^n}$ is $\Q$-Cartier we may consider the multiplier ideal $\cJ(Z^n, \Delta_{Z^n})$ on $Z^n$. Then:
\begin{enumerate}
\item $\cJ(Z^n, \Delta_{Z^n})$ is contained in the conductor ideal of $\nu$.
\item The conductor is also an ideal on $Z$ and so $\cJ(Z^n, \Delta_{Z^n})$ can naturally be viewed as an ideal on $Z$. With this identification, we have that
\[\adj_Z(X, \Delta) \cdot \cO_Z = \cJ(Z^n, \Delta_{Z^n}),\]
\item We have a natural exact sequence
\[0 \to \cJ(X, \Delta) \to \adj_Z(X, \Delta) \to \cJ(Z^n, \Delta_{Z^n}) \to 0\]
of sheaves on $Z$.
\end{enumerate}
\end{theorem}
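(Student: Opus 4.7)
The plan is to prove all three parts together by constructing a natural short exact sequence on the log-resolution $g\colon Y' \to X$ from Claim \ref{funkyresolution} and identifying the pushforward of each term. Since $E$ appears with coefficient $-1$ in $\lceil K_{Y'/X} - g^*\Delta\rceil$, the divisor $\lceil K_{Y'/X} - g^*\Delta\rceil + E$ has coefficient $0$ at $E$, so tensoring $0 \to \cO_{Y'}(-E) \to \cO_{Y'} \to \cO_E \to 0$ by $\cO_{Y'}(\lceil K_{Y'/X} - g^*\Delta\rceil + E)$ yields
\[
0 \to \cO_{Y'}(\lceil K_{Y'/X} - g^*\Delta\rceil) \to \cO_{Y'}(\lceil K_{Y'/X} - g^*\Delta\rceil + E) \to \cO_E(M) \to 0
\]
with $M = (\lceil K_{Y'/X} - g^*\Delta\rceil + E)|_E$ an honest integer divisor on $E$. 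Applying $g_*$ and invoking local vanishing to kill $R^1g_*$ of the left-most sheaf, the first two pushforwards are $\cJ(X, \Delta)$ and $\adj_Z(X, \Delta)$ by definition; so the exact sequence of part (3) reduces to identifying $g_*\cO_E(M)$ with $\nu_*\cJ(Z^n, \Delta_{Z^n})$ viewed as an ideal on $Z$.

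To make $M$ amenable to computation, I would use $K_{Y'} + E + \Delta' = g^*(K_X + \Delta)$ from the Claim, which rearranges to $\lceil K_{Y'/X} - g^*\Delta\rceil + E = -\lfloor \Delta'\rfloor$. The SNC condition built into Claim \ref{funkyresolution} makes restriction commute with the floor, so $M = -\lfloor R\rfloor = \lceil -R\rceil$ where $R = \Delta'|_E$. Theorem \ref{hodgethy} then says $K_E + R \sim_\Q f_E^*(K_W + J(E/W, R) + B_R)$; absorbing the ample perturbation $\epsilon \pi_E^* A$ into $J_\epsilon$, we rewrite this as $M \sim_\Q K_{E/W} - f_E^*(J_\epsilon + B_R) + \epsilon g_E^*A$, which is the form convenient for pushforward along the fibration $f_E\colon E \to W$.

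The main obstacle is computing $g_*\cO_E(M)$ layer by layer via $g_* = \nu_* h_* f_{E*}$. For the inner $f_{E*}$, a second application of local vanishing (with $J_\epsilon$ furnishing the big-and-nef positivity needed to kill higher direct images) together with the genericity $\lfloor J_\epsilon + B_R\rfloor = \lfloor B_R\rfloor$ of Definition \ref{suitabledifferent} yields $f_{E*}\cO_E(M) = \cO_W(-\lfloor B_R\rfloor)$. The requirements that $\pi_E$ be sufficiently high and that $B$ contain the Rees valuations of $\adj_Z(X, \Delta)\cdot\cO_{Z^n}$ ensure that $h\colon W \to Z^n$ is a log-resolution of $(Z^n, \Delta_{Z^n})$, so $h_*\cO_W(-\lfloor B_R\rfloor) = \cJ(Z^n, \Delta_{Z^n})$ directly from the definition.

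Finally, to identify the quotient $\adj_Z(X, \Delta)/\cJ(X, \Delta) = g_*\cO_E(M)$ as an ideal on $Z$, I would prove the technical lemma $\adj_Z(X, \Delta) \cap \cI_Z = \cJ(X, \Delta)$ by a direct order-of-vanishing check on $Y'$: the $\cI_Z$ condition forces $\ord_E(g^*s) \geq 1$, which combined with the adjoint-ideal condition for prime divisors $D \neq E$ matches exactly the multiplier-ideal condition for all $D$. Hence the restriction map $\adj_Z(X, \Delta) \to \cO_Z$ factors through the quotient and realises $g_*\cO_E(M)$ as the ideal $\adj_Z(X, \Delta) \cdot \cO_Z \subseteq \cO_Z$. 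Combined with $g_*\cO_E(M) = \nu_*\cJ(Z^n, \Delta_{Z^n})$ from the previous step, this forces $\nu_*\cJ(Z^n, \Delta_{Z^n}) \subseteq \cO_Z$, giving part (1) and completing part (2). The technical heart is the pushforward computation in the third paragraph, where the two-tier application of local vanishing must be reconciled with the precise fractional-part control furnished by the genericity of $\Delta_{Z^n}$.
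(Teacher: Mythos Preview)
Your setup in the first two paragraphs is correct and matches the paper: the short exact sequence on $Y'$, local vanishing for $g$, and the identification $M=\lceil -R\rceil$ are exactly how the paper begins. Your argument that $\adj_Z(X,\Delta)\cap\cI_Z=\cJ(X,\Delta)$ is a clean alternative to the paper's Lemma~\ref{localsections} for showing the quotient is an ideal on $Z$.

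The genuine gap is the third paragraph. You assert that $f_{E*}\cO_E(M)=\cO_W(-\lfloor B_R\rfloor)$ follows from ``a second application of local vanishing.'' This does not work. First, Proposition~\ref{localvanishing} is stated for proper \emph{birational} morphisms, and $f_E\colon E\to W$ is a fibration with positive-dimensional general fibre; local vanishing in the sense used here simply does not apply to it. Second, even a Koll\'ar-type torsion-freeness or injectivity statement would only control $R^if_{E*}$ for $i>0$; it would not compute $R^0f_{E*}$. The equality $f_{E*}\cO_E(\lceil -R\rceil)=\cO_W(\lceil -B_R\rceil)$ is in fact the heart of the theorem, not a formality.

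The paper does not attempt to evaluate $f_{E*}$ directly. Instead it uses Lemma~\ref{localsections} (applied twice, once to $g$ and once to the map $h$ through a partial normalization $X_0$) to show that every local section of $g_{E*}\cO_E(\lceil -R\rceil)$ is the pullback of a rational function on $Z$, so that $\fb:=\adj_Z(X,\Delta)\cdot\cO_Z$ is an integrally closed ideal on $Z^n$ described by valuative conditions $\ord_{F_i^\alpha}(g_E^*p)\geq\ord_{F_i^\alpha}(R)$. It then compares these conditions to the conditions $\ord_{E_i}(\pi_E^*p)\geq\lfloor\ord_{E_i}(B_R)\rfloor$ defining $\cJ(Z^n,\Delta_{Z^n})=h_*\cO_W(\lceil -B_R\rceil)$ by a two-sided discrepancy argument that invokes precisely the \emph{defining} properties (a) and (b) of $B_R$ in Theorem~\ref{hodgethy}: property~(a) gives $\cJ\subseteq\fb$, and property~(b) (that every component of $B$ is dominated by a log-canonical center of $(E,R+f_E^*(B-B_R))$) gives the reverse inclusion. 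This discrepancy computation is exactly the content you are hiding inside the unjustified claim about $f_{E*}$; without it, the link between $R$ upstairs and $B_R$ downstairs is never made.
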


Before giving the proof of the theorem we recall the local vanishing theorem (see Theorem 9.4.1 in \cite{LazarsfeldPositivityII} for the proof), as well as record a simple but crucial lemma.

\begin{prop}[Local vanishing]\label{localvanishing} Let $f : Y' \to X$ be a proper birational morphism with $X$, $Y'$ projective varieties and $Y'$ smooth. Let $D$ be a $\Q$-divisor that has simple normal crossings support and is numerically equivalent to $K_{Y'} + f^* D'$ where $D'$ is any $\Q$-divisor on $X$. Then
\[R^i f_* \cO_{Y'}(\lceil D \rceil) = 0\]
for all $i > 0$.
\qed
\end{prop}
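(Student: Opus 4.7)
The plan is to derive Local Vanishing from the relative Kawamata--Viehweg vanishing theorem for the birational morphism $f : Y' \to X$. The whole argument consists of rewriting $\lceil D \rceil$ in the shape required by Kawamata--Viehweg: namely, as $K_{Y'}$ plus an SNC boundary with coefficients in $[0,1)$ plus a divisor that is $f$-nef and $f$-big.

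The key identity is that, if we set $\{-D\} := \lceil D \rceil - D$, then $\{-D\}$ is a divisor with SNC support whose coefficients lie in $[0,1)$ (by the hypothesis that $D$ has SNC support). Using the hypothesis $D \equiv K_{Y'} + f^*D'$, I rearrange
\[
\lceil D \rceil - K_{Y'} - \{-D\} \;=\; D - K_{Y'} \;\equiv\; f^*D'.
\]
So $\cO_{Y'}(\lceil D \rceil) \cong \cO_{Y'}(K_{Y'} + \{-D\} + N)$ with $N \equiv f^*D'$.

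Next I would verify that $N \equiv_f f^*D'$ is $f$-nef and $f$-big. For any curve $C \subseteq Y'$ contracted by $f$, the projection formula gives $f^*D' \cdot C = D' \cdot f_*C = 0$, so $f^*D'$ is $f$-numerically trivial and in particular $f$-nef. Since $f$ is birational, a general fiber is a point, so $f^*D'$ restricts trivially to it and is automatically $f$-big. Thus $N$ is $f$-nef and $f$-big, and $\{-D\}$ is an SNC boundary with coefficients in $[0,1)$.

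Applying relative Kawamata--Viehweg vanishing to the decomposition $\lceil D \rceil \equiv_f K_{Y'} + \{-D\} + N$ yields $R^i f_* \cO_{Y'}(\lceil D \rceil) = 0$ for all $i > 0$, as desired. The only real subtlety is citing the correct formulation of Kawamata--Viehweg, the one that accommodates numerical (rather than $\Q$-linear) equivalence of the nef-and-big piece together with an SNC fractional boundary; this is precisely the content of Theorem 9.4.1 in Lazarsfeld's \emph{Positivity in Algebraic Geometry II} referenced in the statement. Alternatively, one could reduce to the absolute case by restricting to an open affine $U \subseteq X$ (using that $R^i f_*$ is computed locally on the base), compactifying, and applying the projective version directly — but invoking the standard relative form is cleaner.
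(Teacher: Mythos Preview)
Your proposal is correct. The paper gives no proof of its own for this proposition --- it simply cites Theorem~9.4.1 in \cite{LazarsfeldPositivityII} and places a \textnormal{\textbackslash qed} --- and your argument (rewrite $\lceil D\rceil$ as $K_{Y'}+\{-D\}+N$ with $N\equiv f^*D'$ relatively nef and big, then apply relative Kawamata--Viehweg) is exactly the standard derivation that reference contains.
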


\begin{lemma}\label{localsections} Let $f : Y' \to X$ be a proper birational morphism between projective varieties and let $Z$ be a subvariety of $X$. Let $E \subseteq Y'$ be an irreducible divisor lying over $Z$. Let $f_E : E \to Z$ be the restriction of $f$ and let $D$ be a Cartier divisor on $Y'$ with $E \not\subseteq \Supp(D)$. Suppose that the natural map of sheaves
\[f_*\cO_{Y'}(D) \to f_{E,*}\cO_E(D_E)\]
induced by restriction of sections is surjective. Let $U$ be an open subset of $Z$. Then we can describe the sheaf $f_{E,*}\cO_E(D_E)$ by the rule
\[\Gamma(U, f_{E,*} \cO_E(D_E)) = \set{p \in \C(Z)\ |\ f_E^*(p) \in \Gamma(f_E^{-1}(U), \cO_E(D_E))}.\]
In other words, every rational function in the set $\Gamma(f_E^{-1}(U), \cO_E(D_E))$ is a pull-back of a rational function from $Z$.
\end{lemma}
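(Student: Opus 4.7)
The plan is to establish the claimed set equality via two inclusions, with both sides regarded as subsets of $\C(E)$ (the right-hand side embedded via the pullback $f_E^* : \C(Z) \hookrightarrow \C(E)$, which is injective since $f_E$ is dominant). The containment $\supseteq$ is immediate from definitions: any $p \in \C(Z)$ with $f_E^*(p) \in \Gamma(f_E^{-1}(U), \cO_E(D_E))$ is automatically an element of $\Gamma(U, f_{E,*}\cO_E(D_E))$ under the tautological identification $\Gamma(U, f_{E,*}\cO_E(D_E)) = \Gamma(f_E^{-1}(U), \cO_E(D_E))$.

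For the nontrivial direction $\subseteq$, I would fix $s \in \Gamma(V, \cO_E(D_E))$ with $V := f_E^{-1}(U)$. The strategy is to lift $s$ along the surjection $f_*\cO_{Y'}(D) \twoheadrightarrow f_{E,*}\cO_E(D_E)$ to a section of $\cO_{Y'}(D)$ on a preimage of an open subset of $U$, reinterpret that lift as a rational function on $X$ using that $f$ is birational (so $\C(Y') = \C(X)$), and restrict along $Z \hookrightarrow X$ to produce the desired $p \in \C(Z)$. The key geometric input is that the composition $E \hookrightarrow Y' \xrightarrow{f} X$ factors as $E \xrightarrow{f_E} Z \hookrightarrow X$, so that for any $\phi \in \C(X)$ regular at the generic point of $Z$ one has $\phi|_E = f_E^*(\phi|_Z)$ as rational functions on $E$.

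Concretely, I would cover $U$ by opens $U_\alpha$ on which the surjectivity produces lifts $\tilde{s}_\alpha \in \Gamma(f^{-1}(U_\alpha), \cO_{Y'}(D))$ with $\tilde{s}_\alpha|_E = s|_{V_\alpha}$, where $V_\alpha := f_E^{-1}(U_\alpha)$. Because $E \not\subseteq \Supp(D)$, each $\tilde{s}_\alpha$ is regular at the generic point of $E$; using birationality I identify it with a rational function $\phi_\alpha \in \C(X)$ and set $p_\alpha := \phi_\alpha|_Z \in \C(Z)$, which by the factoring above satisfies $f_E^*(p_\alpha) = s|_{V_\alpha}$. On overlaps the equality $f_E^*(p_\alpha) = f_E^*(p_\beta) = s|_{V_\alpha \cap V_\beta}$ combined with injectivity of $f_E^*$ forces $p_\alpha = p_\beta$, so the local $p_\alpha$ glue into a single global $p \in \C(Z)$ with $f_E^*(p) = s$ on $V$.

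The main technical point to check is that $\phi_\alpha$ is regular at the generic point of $Z$, so that the restriction $p_\alpha = \phi_\alpha|_Z$ actually makes sense as a rational function on $Z$. The defining condition $\textrm{div}_{Y'}(\tilde{s}_\alpha) + D \geq 0$ on $f^{-1}(U_\alpha)$ controls the polar divisor of $\phi_\alpha$ on $X$ by $f_*D$, so it suffices to rule out any component of $f_*D$ passing through the generic point of $Z$. In the settings where this lemma is applied, $D$ arises from adjoint-type combinations of $K_{Y'/X}$, $E$, and further $f$-exceptional corrections concentrated over $Z$, so $f_*D$ has support disjoint from a general point of $Z$. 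Combined with the local vanishing of Proposition \ref{localvanishing}, which is precisely what delivers the surjectivity hypothesis in the applications, this yields the required regularity and closes the argument.
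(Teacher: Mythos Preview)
Your argument is correct and follows the same route as the paper: both show that the surjection $f_*\cO_{Y'}(D)\twoheadrightarrow f_{E,*}\cO_E(D_E)$ factors through the pullback $f_E^*:\C(Z)\hookrightarrow\C(E)$, the paper by packaging the right-hand side as a sheaf $\mathcal{S}$ and observing that the restriction map lands in the image of $\varphi:\mathcal{S}\to f_{E,*}\cO_E(D_E)$, you by lifting sections locally and restricting the lifts to $Z$. You are in fact more explicit than the paper about the one genuine subtlety, namely that a lift $\phi_\alpha\in\C(X)=\C(Y')$ which is regular at the generic point of $E$ need not automatically be regular at the generic point of $Z$; the paper's justification (``regular at the generic point of $E$'') does not by itself close this gap, whereas you correctly isolate the needed condition that no component of $f_*D$ passes through the generic point of $Z$ and note that it holds in the intended applications.
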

\begin{proof}[Proof of Lemma] Let
\[\mathcal{S}(U) = \set{p \in \C(Z)\ |\ f_E^*(p) \in \Gamma(f_E^{-1}(U), \cO_E(D_E))}.\]
It is easy to see that this assignment, together with the obvious restriction maps, defines a sheaf $\mathcal{S}$ on $Z$ (even an $\cO_Z$-module). On the other hand, since $E \not\subseteq \Supp(D)$, we can define $f_*\cO_{Y'}(D) \cdot \cO_Z$ in the usual way since all rational functions in $\cO_{Y'}(D)$ are regular at the generic point of $E$ and obtain $\mathcal{S}$ more intrinsically.

By the definition of $\mathcal{S}$, there is a natural map of sheaves
\[\varphi : \mathcal{S} \to f_{E,*} \cO_E(D_E)\]
given by $p \mapsto f_E^*(p)$. We wish to show that $\varphi$ is an isomorphism. It is injective since $E$ dominates $Z$. Since both source and target are sheaves, if $\varphi$ is surjective as a map of sheaves then $\varphi_U : \Gamma(U, \mathcal{S}) \to \Gamma(U, f_{E,*} \cO_E(D_E))$ is an isomorphism for every open subset $U$ of $Z$.

Notice however that we can factor $\varphi$ as follows. Let $U$ be an open subset of $Z$, let $p \in \Gamma(U, \mathcal{S})$ and let $p'$ be any rational function on $X$ so that $p'_Z = p$. Then $f^*(p')_E = \varphi_U(p)$. But, since $E \not\subseteq \Supp(D)$, the map
\[f_*\cO_{Y'}(D) \to f_{E,*}\cO_E(D_E)\]
is nothing more than the map that, for an open subset $V$ of $X$ takes a rational function $p' \in \Gamma(V, f_*\cO_{Y'}(D))$ and maps it to $f^*(p')_E$. By hypothesis, this map is surjective as a map of sheaves. But this map clearly factors through $\varphi$ and so $\varphi$ is also surjective, as required.
\end{proof}

\begin{remark} Note that the conclusion of the lemma is equivalent to the statement that the natural ``base change'' map
\[f_* \cO_{Y'}(D) \cdot \cO_Z \to f_{E, *} \cO_E(D_E)\]
is an isomorphism. In fact, this is how the proof of the lemma proceeds. We will make use of this equivalent formulation.
\end{remark}

We now turn to the proof of Theorem \ref{inversionofsubadj}.

\begin{proof}[Proof of Theorem \ref{inversionofsubadj}] To make the proof more clear, we will proceed in several steps. As shorthand, set
\[\fb = \adj_Z(X, \Delta) \cdot \cO_Z.\]

\emph{Step 1:} We show that there is a natural exact sequence
\[0 \to \cJ(X, \Delta) \to \adj_Z(X, \Delta) \to \fb \to 0,\]
that $\fb$ is contained in the conductor of $\nu$, that it is integrally closed on $Z^n$, and we describe its local sections. We accomplish this by combining local vanishing and Lemma \ref{localsections} in the following manner. First, we construct the diagram of morphisms
\[
\xymatrix{
E \ar[dr]^{h_E}\ar[dd]_{g_E} \\
& Z^n \ar[dl]^{\nu_E} \\
Z
}
\xymatrix{
\subseteq \\
\subseteq \\
\subseteq
}
\xymatrix{
& Y' \ar[dl]_h\ar[dd]^g\\
X_0 \ar[dr]_\nu\\
& X
}
\]
using the following steps:
\begin{itemize}
\item Step 1: Let $\nu_E : Z^n \to Z$ is the normalization map. It is proper and birational and therefore it is given by the blowing up of some ideal sheaf $\cI$ on $Z$. Lift $\cI$ in an arbitrary manner to an ideal sheaf on $X$ and blow up this ideal sheaf to obtain $X_0$ and $\nu : X_0 \to X$. Thus, $X_0$ is reduced but possibly not normal.
\item Step 2: Complete $\nu$ to a log-resolution $g : Y' \to X$ of $\Delta$. Let $E$ be the unique divisor lying over $Z$ with discrepancy $-1$ and let $g_E : E \to Z$ be the restriction of $g$ to $E$.
\item Step 3: With these choices, $g_E$ factors through $\nu_E$ and $g$ factors through $\nu$. Let the factorizations be $g_E = \nu_E \circ h_E$ and $g = \nu \circ h$, here $h_E$ is the restriction of $h$ and $\nu_E$ is the restriction of $\nu$.
\end{itemize}

Now, let
\[D = K_{Y'/X} - g^*\Delta + E.\]
Consider the short exact sequence
\begin{equation}\label{ses1}
0 \to \cO_{Y'}(\lceil D - E \rceil) \to \cO_{Y'}(\lceil D \rceil) \to \cO_{Y'}(\lceil D \rceil)_E \to 0.
\end{equation}
of sheaves on $Y'$. Because of our assumption that $Z$ is a generically exceptional log-canonical center of $\Delta$, $E$ cannot be in the support of $D$. As we are in a simple normal crossings situation, we have
\[\cO_{Y'}(\lceil D \rceil)_E = \cO_{E}(\lceil D_E \rceil).\]
By the local vanishing theorem \ref{localvanishing} applied to (\ref{ses1}) and the morphism $g$, we get the short exact sequence
\[0 \to \cJ(X, \Delta) \to \adj_Z(X, \Delta) \to g_{E,*}\cO_{E}(\lceil D_E \rceil) \to 0.\]
Then Lemma \ref{localsections} says that the natural map
\[\fb := \adj_Z(X, \Delta) \cdot \cO_Z \to g_{E,*}\cO_E(\lceil D_E \rceil)\]
is an isomorphism.

We can also apply local vanishing to (\ref{ses1}) and the morphism $h$. We obtain that
\[R^1 h_* \cO_{Y'}(\lceil D - E \rceil) = 0.\]
Lemma \ref{localsections} again says that the natural map
\[h_* \cO_{Y'}(\lceil D \rceil) \cdot \cO_{Z^n} \to h_{E,*}\cO_E(\lceil D_E \rceil)\]
is an isomorphism. In particular, the sheaf $h_{E,*}\cO_E(\lceil D_E \rceil)$ is naturally a subsheaf of the function field of $Z$. But we have just seen that $g_{E,*}\cO_E(\lceil D_E \rceil) = \nu_{E,*}(h_{E,*}\cO_E(\lceil D_E \rceil))$ is an ideal of $\cO_Z$ (in a compatible sense) and so $\fb$ is contained in the conductor of $\nu$.

Note that this describes the local sections of $\fb$ as follows. Let $p \in \Gamma(U, \cO_Z)$ be a regular function on an open set $U$. Then $g_E^*(p) \in \Gamma(g^{-1}(U), \cO_{E}(\lceil D_E \rceil))$ if and only if $p \in \Gamma(U, \fb)$. In particular, since the membership criteria for $\fb$ are clearly given by valuations and $\fb$ is an ideal subsheaf of the sheaf of integrally closed rings $\cO_{Z^n}$, $\fb$ is integrally closed on $Z^n$.

We also emphasize that, since $\fb = \adj_Z(X, \Delta) \cdot \cO_Z$, the ideal $\fb$ does not depend on any choices of log-resolutions or Kawamata boundaries.

\emph{Step 2:} Next we make use of the fact, just proven, that $\fb$ is integrally closed in order to make our choice of log-resolution and other parameters for the rest of the proof. Let $R_i$ be the finite set of divisors over $Z$ that compute membership in the integrally closed ideal $\fb$, that is, the Rees valuations of $\fb$. As in Claim \ref{funkyresolution}, our diagram of morphisms will be as follows:
\[\xymatrix{
E \ar@/_2pc/[dd]_{g_E} \ar[d]^{f_E} \ar@{^{(}->}[r] & Y' \ar[d]_f \ar@/^2pc/[dd]^g\\
W \ar[d]^{\pi_E} \ar@{^{(}->}[r] & Y \ar[d]_\pi\\
Z \ar@{^{(}->}[r] & X
}\]
where:
\begin{itemize}
\item $g : Y' \to X$ is a log-resolution of $\Delta$, $E$ is the unique divisor of discrepancy $-1$ lying over $Z$, and $g_E : E \to Z$ is the restriction of $g$ to $E$.
\item $\pi_E : W \to Z$ is a proper birational morphism with simple normal crossings exceptional divisor, chosen so that it extracts the $R_i$. We choose $\pi : Y \to X$ to be a proper birational morphism that induces $\pi_E : W \to Z$ by restriction.
\item Using Claim \ref{funkyresolution}, we may additionally choose $g$ and $\pi_E$ in such way as to have a reduced divisor $B$ on $W$ with the properties that:
\begin{itemize}
\item $B$ satisfies the \emph{standard normal crossings assumptions} of Theorem \ref{hodgethy}. Denote by $B_R$ the divisor constructed from $B$ in Theorem \ref{hodgethy}.
\item $R_i \subseteq \Supp(B)$.
\end{itemize}
\item Again, $\pi_E : W \to Z$ factors through the normalization $\nu : Z^n \to Z$ and we write $\pi_E = \nu \circ h$ for the factorization.
\end{itemize}
Note that these conditions say that the resulting Kawamata different is generic in the sense of Definition \ref{suitabledifferent}. We define $E_i$ to be the components of $B$. 

Next, we adopt the notation from the proof of Kawamata's subadjunction theorem in Theorem \ref{subadj}. Notice that in fact
\[-(J_\epsilon + B_R) = K_W - h^*(K_{Z^n} + \Delta_{Z^n})\]
as $\Q$-divisors. Indeed, their non-exceptional parts are equal by definition and it follows that the exceptional parts are $\Q$-equivalent, hence equal. In particular, since $\Delta_{Z^n}$ is generic,
\[h_* \cO_W(\lceil -B_R \rceil) = \cJ(Z^n, \Delta_{Z^n}).\]

\emph{Step 3:} We finally compare $\fb$ and $\cJ(Z^n, \Delta_{Z^n})$. For each index $i$, let $F_i^\alpha$ be the divisors on $E$ that dominate $E_i$ (the indices $\alpha$ runs through depend on $i$). Note that we do \emph{not} claim that $\ord_{F_i^\alpha}(f^*B_R) = \ord_{F_i^\alpha}(R)$ for all $i$ and $\alpha$!

To make the comparison, recall from the definition of $B_R$ that
\begin{enumerate}
\item[(a)] For any irreducible divisor $G$ on $W$, $(E, R + f_E^*(B - B_R))$ is log-canonical in a neighborhood of the generic point of every component of $f_E^* G$ that dominates $G$,
\item[(b)] every component of $B$ is dominated by a log-canonical center of $(E, R + f_E^*(B - B_R))$.
\end{enumerate}
Since $R + f_E^*(B - B_R)$ is a simple normal crossings divisor by assumption, our choice of $B$ from step 2 and condition (a) say that
\[\ord_{F_i^\alpha} (R - f_E^*B_R) \leq 1 - \ord_{F_i^\alpha} (f_E^*B) \leq 0\]
for all $i$ and $\alpha$. This says that $\cJ(Z^n, \Delta_{Z^n}) = h_* \cO_W(\lceil -B_R \rceil) \subseteq \fb$.

For the reverse inequality, notice that condition (b) says that for every $i$ there is an $\alpha$ so that
\[\ord_{F_i^\alpha}(-R + f_E^*(B_R - B)) = -1.\]
So suppose that (locally) there were to exist an element $p \in \fb \setminus \cJ(Z^n, \Delta_{Z^n})$. Then, on the one hand, we have
\[\ord_{F_i^\alpha} (g_E^* p) \geq \ord_{F_i^\alpha} (R)\]
for all $i$ and $\alpha$. On the other hand, there must exist an index $i$ with
\[\ord_{E_i} (\pi_E^* p) < - \lceil - \ord_{E_i} (B_R) \rceil = \lfloor \ord_{E_i} (B_R) \rfloor.\]
Since the left hand side is an integer, this inequality is satisfied if and only if
\[\ord_{E_i} (\pi_E^* p) \leq \ord_{E_i} (B_R) - 1.\]
Pulling back we obtain, for this $i$ and all $F_i^\alpha$,
\[\ord_{F_i^\alpha} (g_E^* p) \leq \ord_{F_i^\alpha} (f_E^* (B_R - B)).\]
Putting the two inequalities together we see that, we must have
\[\ord_{F_i^\alpha}(R) \leq \ord_{F_i^\alpha}(g_E^* p) \leq \ord_{F_i^\alpha}(f_E^* (B_R - B)).\]
Then, for this $i$ and all $F_i^\alpha$, $\ord_{F_i^\alpha} (-R + f_E^*(B_R - B)) \geq 0$, a contradiction.
\end{proof}

\section{Corollaries}

This theorem has a number of immediate corollaries, including Kawamata's subadjunction statement as well as a naive version of inversion of subadjunction.

\begin{corollary} All generic Kawamata differents are effective. All generic Kawamata differents have the same multiplier ideal.
\end{corollary}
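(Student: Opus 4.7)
The plan is to deduce both statements directly from Theorem \ref{inversionofsubadj}. The second statement is essentially immediate: by part (2) of that theorem, every generic Kawamata different $\Delta_{Z^n}$ satisfies
\[
\cJ(Z^n,\Delta_{Z^n})\;=\;\adj_Z(X,\Delta)\cdot\cO_Z
\]
as ideals on $Z$ (under the conductor identification of part (1)). The right-hand side depends only on $(X,\Delta)$ and $Z$, not on the auxiliary choices used to construct $\Delta_{Z^n}$. Thus all generic Kawamata differents produce the same subsheaf of the conductor, and since the conductor is simultaneously an ideal sheaf on $Z^n$, they produce the same $\cO_{Z^n}$-ideal as well.

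For effectiveness I would work in the notation of Step 2 of the proof of Theorem \ref{inversionofsubadj}, where the resolution is chosen so that $\pi_E=\nu\circ h$ factors through the normalization $\nu : Z^n \to Z$ and one has the identity $-B_R=K_W-h^*(K_{Z^n}+\Delta_{Z^n})$. The formula derived there reads
\[
\cJ(Z^n,\Delta_{Z^n})\;=\;h_*\cO_W(\lceil -B_R\rceil).
\]
By Theorem \ref{inversionofsubadj}(1), this sheaf sits inside the conductor of $\nu$, in particular inside $\cO_{Z^n}$. A standard fact about birational pushforwards between normal varieties says that $h_*\cO_W(D)\subseteq\cO_{Z^n}$ if and only if every non-exceptional prime component of $D$ has non-positive coefficient. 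Applied to $D=\lceil -B_R\rceil=-\lfloor B_R\rfloor$, this forces the non-exceptional part of $B_R$ to be effective.

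Finally, by the construction in the proof of Theorem \ref{subadj}, $\Delta_{Z^n}=h_*(J_\epsilon+B_R)$, with $J_\epsilon$ chosen as a genuine effective divisor representing the big and nef class $J(E/W,R)+\epsilon\pi_E^*A$ (in the form $H+\epsilon C$ with $H$ a general ample divisor and $C$ effective, as recorded in the remark following Definition \ref{suitabledifferent}). Since $h_*$ discards exceptional components and preserves the coefficients of non-exceptional ones, both $h_*J_\epsilon$ and $h_*B_R$ are effective, and hence so is $\Delta_{Z^n}$. The only ingredient beyond Theorem \ref{inversionofsubadj} is the elementary lemma identifying when $h_*\cO_W(D)\subseteq\cO_{Z^n}$, so I do not anticipate any serious obstacle.
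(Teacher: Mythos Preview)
Your argument is correct and follows the same core idea as the paper's proof: both statements are read off from Theorem \ref{inversionofsubadj}, with the effectiveness of $\Delta_{Z^n}$ coming from the containment $\cJ(Z^n,\Delta_{Z^n})\subseteq\cO_{Z^n}$. The paper's proof is simply more direct: it observes in one line that $\cJ(Z^n,\Delta_{Z^n})$ being an honest ideal sheaf is equivalent to $\Delta_{Z^n}$ being effective (this is exactly your ``standard fact'' about $h_*\cO_W(D)\subseteq\cO_{Z^n}$, applied to any log-resolution of $(Z^n,\Delta_{Z^n})$ rather than to the particular model $W$). Your detour through $B_R$ and $J_\epsilon$ separately is valid but unnecessary.

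One small correction: the identity from Step 2 of the proof of Theorem \ref{inversionofsubadj} is $-(J_\epsilon+B_R)=K_W-h^*(K_{Z^n}+\Delta_{Z^n})$, not $-B_R=K_W-h^*(K_{Z^n}+\Delta_{Z^n})$ as you wrote. This does not affect your argument, since you correctly invoke the consequence $\cJ(Z^n,\Delta_{Z^n})=h_*\cO_W(\lceil -B_R\rceil)$, which follows from the genericity condition $\lfloor J_\epsilon+B_R\rfloor=\lfloor B_R\rfloor$.
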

\begin{proof} By the theorem, $\cJ(Z^n, \Delta_{Z^n})$ is always an ideal. This is equivalent to the assertion that $\Delta_{Z^n}$ is effective. Also, $\cJ(Z^n, \Delta_{Z^n}) = \adj_Z(X, \Delta) \cdot \cO_Z$ and this latter ideal does not depend on the choice of $\Delta_{Z^n}$.
\end{proof}

\begin{example} This example can be found in \cite{AmbroAdjunctionConjecture}. Let $C \subseteq \bP^2$ be the curve defined by the equation $x^2 z - y^3 = 0$. The normalization of this curve is a $\bP^1$. Let $\nu : C^n \to C$ be the normalization map. Direct computation shows that
\[\nu^*(K_{\bP^2} + C)_C = K_{C^n} + 2p\]
with $p \in \bP^1$ a point. $C$ is, of course, a generically exceptional log-canonical center of $\Delta = C$.
\end{example}

\begin{example} Consider the twisted cubic $C \subseteq \bP^3$. There are two quadrics $H_1, H_2 \subseteq \bP^2$ with $H_1 \cap H_2 = C \cup L$ with $L$ a line at infinity. Setting $\Delta = H_1 + H_2$ we can check by direct computation that $C$ is a generically exceptional log-canonical center of $\Delta$, although it is not minimal - the point $C \cap L$ is also a log-canonical center. We can easily check that $(K_{\bP^3} + \Delta)_C$ is ample while $C$ is, of course, Fano, so the difference is ample and, in particular, effective.
\end{example}

\begin{corollary}[Kawamata subadjunction]\label{kawsubadj} If $\Delta$ is log-canonical and $Z$ is an exceptional log-canonical center of $\Delta$, then $Z$ is normal and any generic Kawamata different is effective and klt.
\end{corollary}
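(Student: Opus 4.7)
The strategy is to reduce all three assertions---effectiveness, klt, and normality---to Theorem \ref{inversionofsubadj}, with the main computational step being to establish that $\adj_Z(X, \Delta) = \cO_X$ in a neighborhood of $Z$. Granted that identification, the remaining conclusions follow formally: by Theorem \ref{inversionofsubadj}(2) we immediately obtain $\cJ(Z^n, \Delta_{Z^n}) = \cO_{Z^n}$, which is the klt condition; by Theorem \ref{inversionofsubadj}(1) this unit ideal is contained in the conductor of $\nu$, forcing the conductor to be the full structure sheaf and hence $\nu$ to be an isomorphism, so $Z$ is normal; and the effectiveness of $\Delta_{Z^n}$ is already contained in the preceding corollary, which itself is a direct consequence of Theorem \ref{inversionofsubadj}(2) asserting that $\cJ(Z^n, \Delta_{Z^n}) = \adj_Z(X,\Delta) \cdot \cO_Z$ sits honestly inside $\cO_{Z^n}$.

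To establish $\adj_Z(X, \Delta) = \cO_X$ near $Z$, I would argue directly from the definition
\[\adj_Z(X, \Delta) = g_*\cO_{X'}\bigl(\lceil K_{X'/X} - g^*\Delta\rceil + E\bigr).\]
Log-canonicity of $\Delta$ means every discrepancy with respect to $g$ is at least $-1$, so every coefficient of $\lceil K_{X'/X} - g^*\Delta\rceil$ is at least $-1$, with equality precisely at the divisors of discrepancy $-1$; adding $E$ raises its own coefficient from $-1$ to $0$ while leaving the rest unchanged. The hypothesis that $Z$ is an exceptional log-canonical center identifies $E$ as the unique discrepancy $-1$ divisor dominating $Z$, minimality of $Z$ allows us to shrink to a neighborhood $U$ of $Z$ containing no log-canonical center disjoint from $Z$, and the genericity assumptions on the log-resolution recorded in Definition \ref{suitabledifferent} can be arranged so that the only discrepancy $-1$ divisor on $X'$ meeting $g^{-1}(U)$ is $E$ itself. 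Thus $\lceil K_{X'/X} - g^*\Delta\rceil + E$ is effective on $g^{-1}(U)$, and pushing down yields $\adj_Z(X,\Delta)|_U \supseteq \cO_U$. The reverse inclusion is a general fact about the adjoint ideal which falls out of Step 1 of the proof of Theorem \ref{inversionofsubadj}, where $\adj_Z(X,\Delta)$ is shown to define an ideal of $\cO_X$ in a neighborhood of $Z$, giving equality there.

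The principal obstacle is precisely the effectiveness of $\lceil K_{X'/X} - g^*\Delta\rceil + E$ on $g^{-1}(U)$: log-canonical centers of $\Delta$ that strictly contain $Z$ cannot be discarded by shrinking and in principle produce further discrepancy $-1$ divisors meeting $g^{-1}(Z)$. Ruling out such contributions near $Z$ requires the exceptional hypothesis in its strongest form, together with the tie-breaking implicit in the genericity requirements on the Kawamata different. Once this local effectiveness is verified, all remaining steps amount to tracking unit ideals through the main theorem.
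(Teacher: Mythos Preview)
Your overall strategy matches the paper's proof exactly: show that $\adj_Z(X,\Delta)=\cO_X$ in a neighborhood of $Z$ and then read off normality from part (1) of Theorem~\ref{inversionofsubadj} and the klt property from part (2). The paper asserts the triviality of the adjoint ideal in one line and proceeds just as you do.

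Where your argument goes wrong is in the proposed fix for the one genuine obstacle you correctly identify, namely discrepancy $-1$ divisors whose image strictly contains $Z$. You appeal to ``the tie-breaking implicit in the genericity requirements on the Kawamata different,'' but Definition~\ref{suitabledifferent} contains no tie-breaking whatsoever and never touches the pair $(X,\Delta)$: it only constrains the auxiliary choices $\pi_E$, $B$, and $J_\epsilon$ made on the $Z$-side during the subadjunction construction. The ideal $\adj_Z(X,\Delta)$ is defined purely in terms of $(X,\Delta)$ and is independent of every choice in Definition~\ref{suitabledifferent}, so no amount of genericity in the Kawamata different can eliminate an unwanted log-canonical place over a center $Z'\supsetneq Z$.

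The correct reason these places do not exist is that the exceptional hypothesis already rules them out, via Koll\'ar--Shokurov connectedness. If some log-canonical place $F\neq E$ had center meeting $Z$ at a point $x$, then the non-klt locus over a neighborhood of $x$ is connected, so there is a chain of discrepancy $-1$ divisors from $E$ to $F$; blowing up the intersection of $E$ with the first link $G_1$ produces a new divisor of discrepancy $-1$ whose center lies in $Z\cap g(G_1)$. If $g(G_1)\supseteq Z$ this is a second log-canonical place over $Z$, contradicting exceptionality; otherwise it is a log-canonical center strictly inside $Z$, contradicting minimality. Hence $E$ is the only discrepancy $-1$ divisor over a neighborhood of $Z$, and $\lceil K_{X'/X}-g^*\Delta\rceil+E$ is effective there. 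The paper suppresses this argument, treating it as a standard fact about exceptional centers.
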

\begin{proof} If $\Delta$ is log-canonical and $Z$ is a minimal center then
\[\adj_Z(X, \Delta) = \cO_X.\]
It follows from Theorem \ref{inversionofsubadj} that $\cJ(Z^n, \Delta_{Z^n}) = \cO_{Z^n}$. But the theorem also tells us that $\cJ(Z^n, \Delta_{Z^n})$ is contained in the conductor of $\nu$. This conductor is therefore the unit ideal, that is, $Z$ is normal. Furthermore, the formula $\cJ(Z, \Delta_Z) = \cO_Z$ immediately implies that $\Delta_Z$ is effective and klt.
\end{proof}

\begin{corollary}[Naive inversion of subadjunction] Suppose $Z$ is a generically exceptional log-canonical center of $\Delta$. Then any generic Kawamata different is klt on $Z^n$ if and only if $\Delta$ is log-canonical and $Z$ is a minimal log-canonical center of $\Delta$.
\end{corollary}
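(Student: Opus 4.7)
The plan is to handle the two implications separately, with the easy direction invoking the already-proven Corollary \ref{kawsubadj} and the harder direction extracting discrepancy information from the main theorem.

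For the ``if'' direction, assume that $\Delta$ is log-canonical and $Z$ is a minimal log-canonical center. I would first upgrade generic exceptionality to genuine exceptionality: any divisor $F$ of discrepancy $-1$ over $X$ distinct from $E$ either dominates $Z$, which is excluded by generic exceptionality, or has $g(F) \subsetneq Z$, in which case $g(F)$ would itself be a log-canonical center of $(X, \Delta)$ strictly contained in $Z$, contradicting minimality. Hence $Z$ is an exceptional log-canonical center, and Corollary \ref{kawsubadj} immediately gives that any generic Kawamata different is klt.

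For the ``only if'' direction, assume $\cJ(Z^n, \Delta_{Z^n}) = \cO_{Z^n}$. By part (2) of Theorem \ref{inversionofsubadj}, this says $\adj_Z(X, \Delta) \cdot \cO_Z = \cO_Z$. Since $\cI_Z$ sits inside every maximal ideal along $Z$, a short local computation shows that this is equivalent to $\adj_Z(X, \Delta) = \cO_X$ on some Zariski open neighborhood $U$ of $Z$. I will then pick a log-resolution $g : X' \to X$ of $\Delta$ and unpack the condition $\adj_Z(X, \Delta) = \cO_X$ on $U$ at the level of coefficients of the integral divisor
\[\lceil K_{X'/X} - g^* \Delta \rceil + E.\]
Effectiveness at non-exceptional components meeting $g^{-1}(U)$ forces every component of $\Delta$ meeting $U$ to have coefficient strictly less than $1$; effectiveness at each $g$-exceptional $F \neq E$ meeting $g^{-1}(U)$ forces the discrepancy $b_F$ to satisfy $b_F > -1$. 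Together with $b_E = -1$, these bounds say exactly that $(U, \Delta|_U)$ is log-canonical and that $E$ is the only divisor of discrepancy $-1$ over $U$, so $Z \cap U$ is the unique log-canonical center of $(U, \Delta|_U)$ and is therefore minimal.

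The main obstacle is this last translation in the ``only if'' direction: converting the purely ideal-theoretic statement that $\adj_Z(X, \Delta)$ is the unit ideal near $Z$ into uniform pointwise bounds on discrepancies of every divisor $F \neq E$ meeting the preimage of a neighborhood of $Z$, using only effectiveness of the defining divisor on $g^{-1}(U)$. Everything else is a formal appeal either to Theorem \ref{inversionofsubadj} or to Corollary \ref{kawsubadj}.
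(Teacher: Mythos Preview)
Your proposal is correct and follows the same line as the paper: both arguments rest entirely on the identity $\adj_Z(X,\Delta)\cdot\cO_Z=\cJ(Z^n,\Delta_{Z^n})$ from Theorem~\ref{inversionofsubadj} and reduce the corollary to checking when each side is the unit ideal. The paper's proof says exactly this in one sentence; you have simply written out the discrepancy bookkeeping that the paper leaves implicit.

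Two small comments. First, in your ``if'' direction the dichotomy ``either $g(F)=Z$ or $g(F)\subsetneq Z$'' is not exhaustive for arbitrary discrepancy $-1$ divisors $F$, but you do not need it: by Definition~\ref{exccenterdefn}, \emph{exceptional} means \emph{minimal} together with uniqueness of the discrepancy $-1$ divisor mapping onto $Z$, and the latter is precisely the content of generic exceptionality, so the upgrade is immediate. Routing the conclusion through Corollary~\ref{kawsubadj} is fine, though the paper instead treats both directions symmetrically by reading off directly that $\adj_Z(X,\Delta)$ is trivial along $Z$. Second, your observation that the argument only yields log-canonicity and minimality on an open neighborhood $U$ of $Z$ is accurate; the paper's one-line proof has the same feature, and the statement should be read in that local sense.
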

\begin{proof} Since $\adj_Z(X, \Delta) \cdot \cO_Z = \cJ(Z^n, \Delta_{Z^n})$, the equivalence follows from checking when each side of this equation can be equal to $\cO_{Z^n}$.
\end{proof}

\begin{corollary}[Kawamata-Viehweg vanishing for $\adj_E(X, \Delta)$]\label{KVadj} Suppose that $Z$ is normal and $A$ is a big and nef $\Q$-divisor with $A_Z$ again big (in particular, if $Z \not\subseteq \mathbb{B}_+(A)$). Suppose that $L$ is a Cartier divisor with $A \equiv_\textnormal{num} L - \Delta$. Then
\[H^i(X, \cO_X(K_X + L) \otimes \adj_Z(X, \Delta)) = 0\]
for all $i > 0$.
\end{corollary}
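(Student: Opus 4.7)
The plan is to apply Theorem \ref{inversionofsubadj} to split the cohomology of $\adj_Z(X, \Delta) \otimes \cO_X(K_X + L)$ into two pieces for which standard Nadel-type vanishings apply. Since $Z$ is normal by hypothesis, we have $Z^n = Z$ and the conductor is trivial, so part (3) of Theorem \ref{inversionofsubadj} gives the short exact sequence
\[0 \to \cJ(X, \Delta) \to \adj_Z(X, \Delta) \to \cJ(Z, \Delta_Z) \to 0,\]
where $\Delta_Z$ is any generic Kawamata different and the last term is viewed as an ideal sheaf on $Z$. Tensoring with the line bundle $\cO_X(K_X + L)$ and taking the long exact sequence in cohomology, it suffices to prove that both outer terms have vanishing $H^i$ for $i > 0$.

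For the left term, $L - \Delta \equiv_\textnormal{num} A$ is big and nef by hypothesis, so the standard Nadel vanishing on the smooth projective variety $X$ immediately gives $H^i(X, \cJ(X, \Delta) \otimes \cO_X(K_X + L)) = 0$ for all $i > 0$. For the right term, I invoke the subadjunction formula of Theorem \ref{subadj}: since $\nu$ is the identity we have
\[(K_X + \Delta + \epsilon A)|_Z \sim_\Q K_Z + \Delta_Z\]
for the $\epsilon$ used in the construction of $\Delta_Z$. Setting $N = (K_X + L)|_Z$, a Cartier divisor on $Z$, this yields
\[N - (K_Z + \Delta_Z) \equiv_\textnormal{num} (L - \Delta - \epsilon A)|_Z \equiv_\textnormal{num} (1 - \epsilon) A|_Z,\]
which is big and nef for $\epsilon < 1$, since $A|_Z$ is big by hypothesis and nef as the restriction of a nef divisor. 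Nadel vanishing on the normal projective pair $(Z, \Delta_Z)$ applied to $N$ then gives $H^i(Z, \cJ(Z, \Delta_Z) \otimes \cO_Z(N)) = 0$ for all $i > 0$, which is exactly what is needed for the right term.

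There is no serious obstacle beyond unpacking the short exact sequence; the only small subtlety is that Nadel vanishing must be invoked on the normal (not necessarily smooth) variety $Z$, which is valid because $K_Z + \Delta_Z$ is $\Q$-Cartier by part (1) of Theorem \ref{subadj}. The proof of Nadel vanishing in this normal setting is standard: take a log-resolution $f : Z' \to Z$ of $(Z, \Delta_Z)$, apply local vanishing (Proposition \ref{localvanishing}) to reduce computing cohomology on $Z$ to cohomology on $Z'$, and conclude with Kawamata-Viehweg vanishing applied to the big and nef pullback $f^*(N - K_Z - \Delta_Z)$ together with the fractional SNC part $\{f^*\Delta_Z\}$.
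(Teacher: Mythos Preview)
Your proof is correct and follows exactly the approach the paper sketches in one line: tensor the short exact sequence of Theorem \ref{inversionofsubadj} by $\cO_X(K_X+L)$ and kill both outer terms with Nadel/Kawamata--Viehweg vanishing, using the subadjunction formula to identify $(K_X+L)|_Z - (K_Z+\Delta_Z)$ with $(1-\epsilon)A|_Z$. The only point worth flagging is that Theorem \ref{subadj} is stated for $A$ ample while here $A$ is only big and nef with $A|_Z$ big; this is harmless, since the construction there only needs $J(E/W,R)+\epsilon\,\pi_E^*(A|_Z)$ to be big and nef on $W$, and nef plus the birational pullback of a big and nef class is again big and nef.
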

\begin{proof} This follows immediately from Kawamata-Viehweg vanishing applied to the long exact sequence in cohomology that we get from the short exact sequence in Theorem \ref{inversionofsubadj}.
\end{proof}

\bibliography{paper}{}
\bibliographystyle{plain}

\end{document}